\newcommand{\new}[1]{\textcolor{black}{#1}}
\def\today{\ifcase\month\or
  January\or February\or March\or April\or May\or June\or
  July\or August\or September\or October\or November\or December\fi
  \space\number\day, \number\year}
\DeclareMathOperator{\sgn}{\mathrm{sgn}}
\DeclareMathOperator{\supp}{\mathrm{supp}}
 \newtheorem{theorem}{Theorem}
 \newtheorem{lemma}[theorem]{Lemma}
 \theoremstyle{definition}
 \newtheorem{example}[theorem]{Example}
 \theoremstyle{remark}
 \newcommand{\R}{\mathbb{R}}
 \newcommand{\hh}{\tfrac12}
  \renewcommand{\d}{\text{\rm d}}
 \newcommand{\du}{\text{\rm d}u}
 \newcommand{\dx}{\text{\rm d}x}
\newcommand{\im}{{\rm Im}\,}
\newcommand{\re}{{\rm Re}\,}
\begin{document}
\title[A note on the zeros of zeta and $L$-functions]{A note on the zeros of zeta and $L$-functions}
\author[Carneiro, Chandee and Milinovich]{Emanuel Carneiro, Vorrapan Chandee and Micah B. Milinovich}
\subjclass[2000]{11M06, 11M26, 11M36, 11M41, 41A30}
\keywords{Riemann zeta-function, automorphic $L$-functions, Beurling-Selberg extremal problem, extremal functions, exponential type}
\address{IMPA - Instituto Nacional de Matem\'{a}tica Pura e Aplicada - Estrada Dona Castorina, 110, Rio de Janeiro, RJ, Brazil 22460-320}
\email{carneiro@impa.br}
\address{Department of Mathematics, Burapha University, 169 Long-Hard Bangsaen Road, Saen Sook Sub-district, Mueang District, Chonburi 20131, Thailand}
\email{vorrapan@buu.ac.th}
\address{Department of Mathematics, University of Mississippi, University, MS 38677 USA}
\email{mbmilino@olemiss.edu}

\allowdisplaybreaks
\numberwithin{equation}{section}

\maketitle

\begin{abstract}
Let $\pi S(t)$ denote the argument of the Riemann zeta-function at the point $s=\tfrac12+it$. Assuming the Riemann hypothesis, we give a new and simple proof of the sharpest known bound for $S(t)$. We discuss a generalization of this bound for a large class of $L$-functions including those which arise from cuspidal automorphic representations of GL($m$) over a number field.  We also prove a number of  related results including bounding the order of vanishing of an $L$-function at the central point and bounding the height of the lowest zero of an $L$-function.
\end{abstract}

\section{Introduction}

Let $\zeta(s)$ denote the Riemann zeta-function and, if $t$ does not correspond to an ordinate of a zero of $\zeta(s)$, let
\[
S(t) = \frac{1}{\pi} \arg \zeta(\tfrac{1}{2}\!+\!it),
\]
where the argument is obtained by continuous variation along straight line segments joining the points $2, 2+it$, and $\tfrac12+it$, starting with the value zero. If $t$ does correspond to an ordinate of a zero of $\zeta(s)$, set
\[
S(t) =\frac{1}{2}\,\lim_{\varepsilon\to 0}   \big\{ S(t\!+\!\varepsilon) + S(t\!-\!\varepsilon)  \big\}.
\]
The function $S(t)$ arises naturally when studying the distribution of the nontrivial zeros of the Riemann zeta-function. For $t>0$, let $N(t)$ denote the number of zeros $\rho=\beta+i\gamma$ of $\zeta(s)$ with ordinates satisfying $0<\gamma \le t$ where any zeros with $\gamma =  t$ are counted with weight $\tfrac12$. Then, for $t \ge 1$, it is known that
\[
N(t) = \frac{t}{2\pi} \log \frac{t}{2\pi} - \frac{t}{2\pi} +\frac{7}{8} + S(t) + O\Big(\frac{1}{t}\Big).
\]

\smallskip

In 1924, assuming the Riemann hypothesis, Littlewood \cite{L}  proved that
\[
S(t) = O\!\left( \frac{\log t}{\log\log t} \right)
\]
as $t\to \infty$. The order of magnitude of this estimate has never been improved.  Using the Guinand-Weil explicit formula for the zeros of $\zeta(s)$ and the classical Beurling-Selberg majorants and minorants for characteristic functions of intervals, Goldston and Gonek \cite{GG} established Littlewood's estimate in the form
\begin{equation}\label{GG}
\left| S(t) \right| \le \left(\frac{1}{2} \!+\!o(1)\right) \frac{\log t}{\log\log t}
\end{equation}
as $t\to \infty$. 
The authors \cite{CCM} recently sharpened this inequality and proved the following theorem.

\begin{theorem}\label{thm1}
Assume the Riemann hypothesis. Then
\[
\left| S(t) \right| \le  \left(\frac{1}{4} \!+\!o(1)\right) \frac{\log t}{\log\log t} 
\]
for sufficiently large $t$, where the term of $o(1)$ is $O(\log \log \log t / \log \log t)$.
\end{theorem}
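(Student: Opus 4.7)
The plan is to apply the Guinand--Weil explicit formula in conjunction with a Beurling--Selberg-type extremal function argument, refining the approach of Goldston--Gonek \cite{GG}. Recall that the Guinand--Weil formula states that for a suitable even Schwartz function $f$ with $\widehat{f}$ supported in $[-\Delta,\Delta]$,
\[
\sum_{\gamma} f(\gamma - t) \;=\; \frac{1}{2\pi}\int_{\R} f(r)\,\Omega(r,t)\,dr \;-\; 2\sum_{n \ge 2}\frac{\Lambda(n)}{\sqrt n}\,\widehat{f}\Bigl(\frac{\log n}{2\pi}\Bigr)\cos(t\log n) \;+\; O(1),
\]
where $\Omega(r,t)$ is an explicit archimedean density comparable to $\log t$ for $r$ bounded and $t$ large. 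Starting from $\pi S(t) = -\mathrm{Im}\int_{1/2}^{\infty}(\zeta'/\zeta)(\sigma + it)\,d\sigma$ and inserting the Hadamard product of $\zeta$, one writes $\pi S(t) = \sum_\gamma K(t-\gamma) + \Phi(t)$, where $\Phi(t) = O(1)$ and $K$ is an odd arctangent-type kernel encoding the regularized contribution of a single zero.

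The core step is then to construct entire functions $M_\Delta, m_\Delta$ of exponential type $2\pi\Delta$ satisfying $m_\Delta(x) \le K(x) \le M_\Delta(x)$ on $\R$ with minimal $L^1$-deficit $\int(M_\Delta - K) = \int(K - m_\Delta) = C/\Delta + o(1/\Delta)$ for a sharp constant $C$. This is a Beurling--Selberg extremal problem, and its sharp solution for an arctangent-type $K$ produces a constant $C$ exactly half of the one arising in \cite{GG} from majorizing an indicator $\chi_{[-h,h]}$; this reflects that $K$ has only a single soft singularity in contrast with the double jump of the indicator. Substituting $M_\Delta(\,\cdot\,-t)$ and $m_\Delta(\,\cdot\,-t)$ into the explicit formula bounds $\sum_\gamma K(t-\gamma)$ from above and below: the archimedean main term contributes $(\int M_\Delta)(\log t)/(2\pi) = \bigl((C/\Delta)+o(1/\Delta)\bigr)(\log t)/(2\pi)$, while the prime sum, truncated by the support of $\widehat{M_\Delta}$ to $n \le e^{2\pi\Delta}$, is controlled via a Chebyshev/Mertens-type estimate together with the structural decay of $\widehat{M_\Delta}$.

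Optimizing $\Delta$ to balance the two error contributions then yields $|\pi S(t)| \le (\pi/4 + o(1))\log t/\log\log t$, which is the desired inequality, with an error of the prescribed order $\log\log\log t/\log\log t$. The main obstacle will be the sharp extremal construction: producing $M_\Delta, m_\Delta$ for the specific arctangent-type kernel $K$ whose $L^1$-deficit realizes the optimal constant $C$, since this is precisely what fixes the leading coefficient $1/4$ (as opposed to the $1/2$ of \cite{GG}). Once the extremal problem is settled, the derivation of the kernel representation, the application of the explicit formula, the prime sum estimate, and the final optimization in $\Delta$ are comparatively routine. A subsidiary technical point is to verify that the smooth remainder $\Phi(t)$ is uniformly $O(1)$ in $t$, which follows from standard estimates on $\Gamma'/\Gamma$ along vertical lines.
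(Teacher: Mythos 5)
Your argument is sound and would yield the theorem, but it is not the proof given in this paper --- it is essentially the ``more direct'' proof from the authors' earlier work \cite{CCM}, which the introduction summarizes and which this note is explicitly written to bypass. You represent $\pi S(t) = \sum_\gamma F(t-\gamma) + O(1)$ with the arctangent kernel $F(x)=\arctan(1/x)-x/(1+x^2)$, construct sharp one-sided bandlimited approximations $m_\Delta^\pm$ to $F$ (which requires the Carneiro--Littmann extremal machinery from \cite{CL}), and push these through the explicit formula. You correctly flag the sharp extremal construction for $F$ as ``the main obstacle''; the whole point of the present note is that this obstacle can be avoided entirely.

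The proof given here instead works directly with the counting function. Lemma \ref{Lem3} gives the exact identity $N(t) = \frac{1}{2\pi}\int_{-t}^t\frac{\Gamma_\R'}{\Gamma_\R}(\tfrac12+iu)\,du + S(t)+1$. One then takes the \emph{classical} Beurling--Selberg majorant $R^+$ of the indicator $\chi_{[-t,t]}$ (Lemma \ref{Lem2}, with $L^1$-deficit exactly $1/\Delta$), writes the trivial inequality $0 \le \sum_\gamma R^+(\gamma) - 2N(t)$, and evaluates $\sum_\gamma R^+(\gamma)$ via the explicit formula. No new extremal problem is solved; the construction is the textbook one. The factor-of-two improvement over Goldston--Gonek then comes from symmetry rather than from a sharper extremal constant: since the $L^1$-deficit $1/\Delta$ is independent of interval length and $\zeta$ has as many zeros in $[-t,0]$ as in $[0,t]$, approximating $\chi_{[-t,t]}$ bounds $2S(t)$ at the same cost that approximating a one-sided interval bounds $S(t)$. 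Your heuristic that the factor two ``reflects that $K$ has only a single soft singularity in contrast with the double jump of the indicator'' is not the correct accounting in either proof; if you pursue your route you would need the precise value $\pi/(2\Delta)$ for the $L^1$-deficit in the normalization of \cite[Lemma 6]{CCM} (restated as (P3) in Section \ref{L-functions}), and you should verify that this, after the change of scale, really does give $1/4$. One genuine advantage of your route that is worth noting: it extends unchanged to non-self-dual $L$-functions, whereas the indicator-function argument of this paper, applied to a general $\Lambda(s,\pi)$, only controls $|S(t,\pi)+S(t,\tilde\pi)|$; this is precisely why Section \ref{L-functions} reverts to the \cite{CCM}-style argument in the general case.
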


In this note, we give a new and much simpler proof of Theorem \ref{thm1}. As in the work of Goldston and Gonek mentioned above, we use the explicit formula in conjunction with the classical majorants and minorants of exponential type for characteristic functions of intervals that were constructed by Beurling and Selberg. In \cite{CCM}, two proofs of Theorem \ref{thm1} are given. The more direct of these proofs proceeds as follows. Assuming the Riemann hypothesis, for $t$ not corresponding to an ordinate of a zero of $\zeta(s)$, it is shown in \cite{CCM} that
\[
S(t) = \frac{1}{\pi} \sum_{\gamma} F(t\!-\!\gamma) + O(1) ,
\]
where
\begin{equation}\label{Def_F}
F(x) = \arctan\left( \frac{1}{x} \right) \!-\! \frac{x}{1+x^2},
\end{equation}
and the sum runs over the nontrivial zeros $\rho=\tfrac12+i\gamma$ of $\zeta(s)$ counted with multiplicity.  Drawing upon the work of Carneiro and Littmann \cite{CL}, upper and lower bounds for $S(t)$ are established by replacing $F(x)$ in the sum over zeros by (optimally chosen) real entire majorants and minorants of $F(x)$ and then applying the explicit formula. Using similar ideas, Chandee and Soundararajan \cite{CS} proved that 
\[
\log|\zeta(\tfrac{1}{2}\!+\!it)| \le \left( \frac{\log 2}{2} \!+\! o(1) \right) \frac{\log t}{\log\log t}
\] 
as $t\to \infty$, also under the assumption of the Riemann hypothesis. The theory of extremal functions of exponential type has also been recently used to study zero repulsion in families of elliptic curve $L$-functions \cite{Marshall} and  to improve the upper and lower bounds for the pair correlation of zeros of $\zeta(s)$ under the Riemann hypothesis \cite{CCLM}.

\smallskip

In \text Section \ref{L-functions}, we discuss how to extend Theorem \ref{thm1} to a large class of $L$-functions  in both analytic conductor aspect and in $t$-aspect.  We also prove a number of  related results including bounding the order of vanishing of an $L$-function at the central point and bounding the height of the lowest zero of an $L$-function. If an $L$-function is self-dual, our new proof of Theorem \ref{thm1} generalizes in a natural  way.  However, if an $L$-function is not self-dual, then our new proof does not generalize but we show that an analogue of Theorem \ref{thm1} is still possible using a modification of our previous approach in \cite{CCM}.

\section{Beurling-Selberg majorants and minorants}

For functions $R \in L^1(\mathbb{R})$, let 
\[
\widehat{R}(\xi) = \int_{-\infty}^{\infty} R(x) \, e^{-2\pi i x \xi} \, \mathrm{d}x
\]
denote the Fourier transform of $R$. In the 1930s, Beurling observed that the real entire functions
\[
H^\pm(z) = \left(\frac{\sin \pi z}{\pi}\right)^2
 \left\{ \sum_{m=-\infty}^{\infty} \frac{\sgn(m)}{(z\!-\!m)^2} + \frac{2}{z} \right\} \pm \left(\frac{\sin \pi z}{\pi z}\right)^2
 \]
satisfy the inequalities $H^-(x) \le \sgn(x) \le H^+(x)$ for all real $x$, have Fourier transforms $\widehat{H}^\pm(\xi)$ supported in $[-1,1]$, and satisfy
\[
\int_{-\infty}^\infty \left| H^\pm(x) \!-\! \sgn(x) \right| \mathrm{d}x = 1.
\]
Moreover, Beurling showed that among all real entire majorants and minorants for $\sgn(x)$ with Fourier transforms supported in $[-1,1]$, the functions $H^\pm(x)$ minimize the $L^1(\mathbb{R})$-distance to $\sgn(x)$. The details of this construction can be found in \cite{V}.

\smallskip

For $t>0$, let $\chi_{[-t,t]}(x)$ denote the characteristic function of the interval $[-t,t]$ (normalized at the endpoints). From the above properties of Beurling's functions $H^\pm(z)$, Selberg observed that the real entire functions
\begin{equation}\label{Def_BS_majorants}
R^\pm(z) = \frac{1}{2} \, \Big\{ H^\pm\big(\Delta(t\!+\!z)\big) +H^\pm\big(\Delta(t\!-\!z)\big)  \Big\}
\end{equation}
satisfy the inequalities $R^-(x) \le \chi_{[-t,t]}(x) \le R^+(x)$ for all real $x$ and have Fourier transforms supported in $[-\Delta,\Delta]$. We summarize these (and some other relevant) properties of the functions $R^\pm(z)$ in the following lemma.

\begin{lemma} \label{Lem2}
Let $t$ and $\Delta$ be positive real numbers. The even real entire functions $R^\pm(z)$ defined in \eqref{Def_BS_majorants} satisfy the following properties:
\begin{enumerate}
\item[(i)] $\displaystyle{R^-(x) \le \chi_{[-t,t]}(x) \le R^+(x)}$ for all real $x$ \textup{;} 
\smallskip
\item[(ii)] $\displaystyle{\int_{-\infty}^{\infty} \big| R^\pm(x) -\chi_{[-t,t]}(x) \big| \, \mathrm{d}x = \frac{1}{\Delta} }$ \textup{;}
\smallskip
\item[(iii)] $\displaystyle{R^\pm(z) \ll e^{2\pi \Delta |\mathrm{Im } \, z|}}$ \textup{;}
\smallskip
\item[(iv)] $\displaystyle{R^\pm(x) \ll  \min(1, \Delta^{-2} (|x|-t)^{-2})}$ for $|x| > t$ \textup{;}
\smallskip
\item[(v)] $\displaystyle{\widehat{R}^\pm(\xi) = 0}$ for $|\xi| \ge \Delta$ \textup{;}
\smallskip
\item[(vi)] $\displaystyle{\widehat{R}^\pm(\xi) = \frac{\sin 2\pi t \xi}{\pi \xi} + O\Big(\frac{1}{\Delta}\Big)}$ for $|\xi| \le \Delta$.
\end{enumerate}
The implied constants in {\rm (iii), (iv)} and {\rm (vi)} are absolute.
\end{lemma}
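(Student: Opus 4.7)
The plan is to reduce each item to the corresponding property of Beurling's functions $H^\pm$ reviewed above, using the identity
\[ \chi_{[-t,t]}(x) = \tfrac{1}{2}\bigl(\sgn(t+x) + \sgn(t-x)\bigr), \]
valid pointwise with the convention $\sgn(0)=0$.

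For (i), I apply the inequalities $H^-(y) \le \sgn(y) \le H^+(y)$ at the two points $y=\Delta(t+x)$ and $y=\Delta(t-x)$ and average. For (ii), property (i) says that $R^\pm(x) - \chi_{[-t,t]}(x)$ does not change sign, so the absolute value drops out; the substitutions $y=\Delta(t\pm x)$ then reduce each half of the integral to $\tfrac{1}{2\Delta}\int_{-\infty}^{\infty} |H^\pm(y)-\sgn(y)|\,dy = \tfrac{1}{2\Delta}$, and the two pieces sum to $1/\Delta$. For (iii), one verifies directly from the defining series that $H^\pm$ is of exponential type $2\pi$, so $H^\pm(\Delta(t\pm z))$ is of exponential type $2\pi\Delta$ in $z$ and so is their average.

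For (iv), the key input is the pointwise estimate $H^\pm(x) - \sgn(x) = O(x^{-2})$ as $|x|\to\infty$, which follows from the explicit formula for $H^\pm$ together with the reflection identity $\psi'(1-x) + \psi'(x) = \pi^2/\sin^2(\pi x)$ for the trigamma function (cf.\ \cite{V}). For $|x|>t$ we have $\chi_{[-t,t]}(x)=0$ and $\sgn(t+x)+\sgn(t-x)=0$, so
\est{
R^\pm(x) = \tfrac{1}{2}\bigl\{H^\pm(\Delta(t+x)) - \sgn(\Delta(t+x))\bigr\} + \tfrac{1}{2}\bigl\{H^\pm(\Delta(t-x)) - \sgn(\Delta(t-x))\bigr\};
}
each summand is $O\bigl((\Delta(|x|-t))^{-2}\bigr)$, using $|t\pm x| \ge \bigl||x|-t\bigr|$. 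The uniform bound $R^\pm \ll 1$ is immediate from the boundedness of $H^\pm$ on $\mathbb{R}$, yielding the stated minimum.

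For (v), properties (iii) and (iv) together say that $R^\pm$ is an entire function of exponential type $2\pi\Delta$ lying in $L^2(\mathbb{R})$, and the Paley-Wiener theorem gives $\supp(\widehat{R}^\pm)\subseteq[-\Delta,\Delta]$. For (vi), I would decompose $R^\pm = \chi_{[-t,t]} + (R^\pm - \chi_{[-t,t]})$: the Fourier transform of the first piece is $\sin(2\pi t\xi)/(\pi\xi)$, and by (ii) the second piece has $L^1$-norm $1/\Delta$, hence its Fourier transform is bounded pointwise by $1/\Delta$. The only step with any nontrivial content is the quadratic pointwise decay of $H^\pm - \sgn$ that feeds into (iv); all other items are formal consequences of Beurling's construction.
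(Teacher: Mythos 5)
The paper's own ``proof'' of Lemma~\ref{Lem2} is a citation to \cite[Lemma 2]{GG} and to the surveys \cite{M,S,V}; it supplies no argument. You, on the other hand, have written out the standard deduction from the properties of Beurling's $H^\pm$, which is essentially the argument given in those references. Your reductions are correct: the identity $\chi_{[-t,t]}(x)=\tfrac12(\sgn(t+x)+\sgn(t-x))$ (with $\sgn(0)=0$, matching the normalization at the endpoints) cleanly gives (i); the change of variables $y=\Delta(t\pm x)$ together with the sign-definiteness from (i) gives (ii); the decay $H^\pm(y)-\sgn(y)=O(y^{-2})$ together with the boundedness of $H^\pm$ on $\mathbb{R}$ gives (iv); Paley--Wiener gives (v) from (iii) and the $L^2$ bound implied by (iv); and the decomposition $\widehat{R}^\pm = \widehat{\chi}_{[-t,t]} + \widehat{(R^\pm-\chi_{[-t,t]})}$ with the $L^1\to L^\infty$ bound and (ii) gives (vi).

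The only place where you elide a step is (iii). Observing that $H^\pm$ has exponential type $2\pi$ gives $R^\pm(z)\ll_\varepsilon e^{(2\pi\Delta+\varepsilon)|\mathrm{Im}\,z|}$, but the lemma asserts the sharper, uniform bound $R^\pm(z)\ll e^{2\pi\Delta|\mathrm{Im}\,z|}$ with absolute implied constant. To get this you also need $|H^\pm(w)|\le C\,e^{2\pi|\mathrm{Im}\,w|}$ with absolute $C$, which follows from the exponential type together with the uniform bound $H^\pm\ll 1$ on $\mathbb{R}$ via Phragm\'en--Lindel\"of (or by direct estimation of the defining series). You in fact use the boundedness of $H^\pm$ on $\mathbb{R}$ in your proof of (iv), so the ingredient is already on the table; it would be worth stating that it is what upgrades ``exponential type $2\pi\Delta$'' to the explicit estimate in (iii). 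With that remark inserted, the proof is complete and is the same in substance as the one in \cite{GG,V} that the paper cites.
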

\begin{proof}
This is a specialization of \cite[Lemma 2]{GG}. For a further discussion on these Beurling-Selberg extremal functions, as well as proofs of the above assertions, see the survey articles of Montgomery \cite{M}, Selberg \cite{S} and Vaaler \cite{V}.
\end{proof}

\section{Proof of Theorem \ref{thm1}}

The functional equation for the Riemann zeta-function, in symmetric form, can be written as
\[
\Gamma_{\mathbb{R}}(s)\zeta(s) =\Gamma_{\mathbb{R}}(1\!-\!s) \zeta(1\!-\!s),
\]
where $\Gamma_{\mathbb{R}}(s) = \pi^{-\frac{s}{2}} \Gamma(\tfrac{s}{2})$. 

\begin{lemma} \label{Lem3}
For all $t > 0$, we have
\[
N(t)  \, = \, \frac{1}{2\pi }\int_{-t}^t \frac{\Gamma_{\mathbb{R}}'}{\Gamma_{\mathbb{R}}}(\tfrac{1}{2}\!+\!iu) \, \mathrm{d}u +  \, S(t) +  1.
\]
\end{lemma}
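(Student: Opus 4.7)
The plan is to view the stated identity as the classical Riemann--von Mangoldt formula for $\zeta(s)$, derived via the argument principle applied to the completed zeta function
\[
\xi(s) \;=\; \tfrac{1}{2}\,s(s-1)\,\Gamma_{\mathbb{R}}(s)\,\zeta(s),
\]
which is entire of order one, satisfies $\xi(s)=\xi(1-s)$, and is real on $\mathbb{R}$ (and hence on the critical line, since $\xi(\tfrac12+it)=\overline{\xi(\tfrac12+it)}$).

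First I would apply the argument principle to $\xi$ on a rectangle $\mathcal{R}$ with vertices $-1\pm it$ and $2\pm it$, where $\pm t$ are chosen not to be ordinates of zeros (the borderline case being absorbed by the symmetric convention for $S(t)$ recalled in the introduction). The zeros of $\xi$ are precisely the nontrivial zeros of $\zeta$, and they occur in complex-conjugate pairs, so $\tfrac{1}{2\pi}\Delta_{\mathcal{R}}\arg\xi = 2N(t)$. The substitution $w=1-s$ sends the left half of $\mathcal{R}$ traversed counterclockwise onto the right half traversed counterclockwise, and $\xi(s)=\xi(w)$ along the way; hence the two halves contribute equally. Letting $\mathcal{C}$ denote the right portion, traversed from $\tfrac12-it$ through $2-it$ and $2+it$ to $\tfrac12+it$, one obtains
\[
N(t) \;=\; \tfrac{1}{2\pi}\,\Delta_{\mathcal{C}}\arg\xi(s).
\]

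Next I would split $\arg\xi = \arg\bigl(\tfrac12 s(s-1)\bigr) + \arg\Gamma_{\mathbb{R}}(s) + \arg\zeta(s)$ and evaluate each piece along $\mathcal{C}$. The polynomial $\tfrac12 s(s-1)$ is real and negative on the closing segment $\{\tfrac12+iu : -t\le u\le t\}$, so its argument is constant there; the closed loop obtained by appending this segment to $\mathcal{C}$ encloses only the single zero $s=1$, giving $\tfrac{1}{2\pi}\Delta_{\mathcal{C}}\arg\bigl(\tfrac12 s(s-1)\bigr) = 1$. For $\zeta(s)$, the definition of $S(t)$ combined with Schwarz reflection yields $\arg\zeta(\tfrac12\pm it) = \pm\pi S(t)$, whence $\tfrac{1}{2\pi}\Delta_{\mathcal{C}}\arg\zeta = S(t)$. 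For $\Gamma_{\mathbb{R}}(s)$, holomorphic and zero-free on $\re s > 0$,
\[
\Delta_{\mathcal{C}}\arg\Gamma_{\mathbb{R}}(s) \;=\; \im\log\Gamma_{\mathbb{R}}(\tfrac12+it) - \im\log\Gamma_{\mathbb{R}}(\tfrac12-it) \;=\; 2\,\im\log\Gamma_{\mathbb{R}}(\tfrac12+it),
\]
using the reflection $\log\Gamma_{\mathbb{R}}(\tfrac12-it) = \overline{\log\Gamma_{\mathbb{R}}(\tfrac12+it)}$.

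To convert this last quantity into the integral appearing in the lemma, I would differentiate to obtain $\tfrac{d}{du}\log\Gamma_{\mathbb{R}}(\tfrac12+iu) = i\,\tfrac{\Gamma_{\mathbb{R}}'}{\Gamma_{\mathbb{R}}}(\tfrac12+iu)$, so that
\[
\int_{-t}^{t}\frac{\Gamma_{\mathbb{R}}'}{\Gamma_{\mathbb{R}}}(\tfrac12+iu)\,\d u \;=\; -i\bigl[\log\Gamma_{\mathbb{R}}(\tfrac12+it) - \log\Gamma_{\mathbb{R}}(\tfrac12-it)\bigr] \;=\; 2\,\im\log\Gamma_{\mathbb{R}}(\tfrac12+it),
\]
the right-hand side being real because the imaginary parts of the integrand cancel and the real parts reinforce under $u\mapsto -u$. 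Assembling the three contributions produces the claimed identity. The main obstacle I expect is the careful bookkeeping of continuous branches of $\arg$ across the corners of $\mathcal{C}$ and the verification that the functional-equation substitution genuinely halves the total argument change; both points are routine once one exploits the reality of $\xi$ on the critical line.
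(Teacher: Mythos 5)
Your proposal is correct and follows essentially the same route as the paper. The paper's proof consists of two steps: (i) cite the classical Riemann--von Mangoldt formula $N(t) = \tfrac{1}{\pi}\arg\Gamma_{\mathbb{R}}(\tfrac12+it) + S(t) + 1$ (referring to Montgomery--Vaughan, Theorem 14.1, which proves it exactly by the argument-principle/functional-equation/Schwarz-reflection argument you spell out), and (ii) rewrite $2\arg\Gamma_{\mathbb{R}}(\tfrac12+it)$ as $\int_{-t}^t \tfrac{\Gamma_{\mathbb{R}}'}{\Gamma_{\mathbb{R}}}(\tfrac12+iu)\,\mathrm{d}u$ via the same differentiation-and-reflection observation you give; you have simply inlined the proof of the cited formula rather than delegating it.
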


\begin{proof}
Using the functional equation for $\zeta(s)$, the argument principle, and the Schwarz reflection principle, it can be shown that 
\[
N(t) = \frac{1}{\pi} \arg\Gamma_{\mathbb{R}}(\tfrac{1}{2}\!+\!it) +  S(t)  +  1
\]
for any $t > 0$.  See, for instance, \cite[Theorem 14.1]{MV}. Since
\[
\int_{-t}^t \frac{\Gamma_{\mathbb{R}}'}{\Gamma_{\mathbb{R}}}(\tfrac{1}{2}\!+\!iu) \,\mathrm{d}u = 2 \arg\Gamma_{\mathbb{R}}(\tfrac{1}{2}\!+\!it),
\]
the lemma follows.
\end{proof}

\begin{lemma} \label{Lem4}
Let $h(s)$ be analytic in the strip $|\mathrm{Im} \, s| \le \tfrac12+\varepsilon$ for some $\varepsilon>0$, and assume that $|h(s)| \ll (1+|s|)^{-(1+\delta)}$ for some $\delta>0$ when $|\mathrm{Re} \, s| \to \infty$.
Then
\begin{align}\label{Explicit_Formula}
\begin{split}
\sum_\rho h\!\left(\frac{\rho\!-\!\tfrac12}{i}\right) &= h\!\left(\frac{1}{2i}\right) + h\!\left(-\frac{1}{2i}\right) + \frac{1}{\pi}  \int_{-\infty}^{\infty} h(u) \, \mathrm{Re} \,  \frac{\Gamma_{\mathbb{R}}'}{\Gamma_{\mathbb{R}}}(\tfrac{1}{2}\!+\!iu) \, \mathrm{d}u 
\\
& \qquad \qquad - \frac{1}{2\pi}  \sum_{n=2}^\infty \frac{\Lambda(n)}{\sqrt{n}} \left\{   \widehat{h}\!\left( \frac{\log n}{2\pi} \right) + \widehat{h}\!\left( -\frac{\log n}{2\pi} \right)  \right\},
\end{split}
\end{align}
where the sum on the left-hand side runs over the nontrivial zeros $\rho$ of $\zeta(s)$ and $\Lambda(n)$ is the von Mangoldt function defined to be $\log p$ if $n=p^k$, $p$ a prime and $k\ge 1$, and zero otherwise. 
\end{lemma}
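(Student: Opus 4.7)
The plan is to prove \eqref{Explicit_Formula} by contour integration of $\frac{\xi'}{\xi}(s)\,H(s)$, where
\[
\xi(s) := \tfrac{1}{2}\,s(s-1)\,\Gamma_{\mathbb{R}}(s)\,\zeta(s)
\]
is the (entire) completed zeta function, whose zeros are precisely the nontrivial zeros of $\zeta$, and $H(s) := h\bigl((s-\tfrac12)/i\bigr)$. Under the hypotheses on $h$, the function $H$ is holomorphic in the vertical strip $-\varepsilon \le \mathrm{Re}\,s \le 1+\varepsilon$ and decays there like $(1+|\mathrm{Im}\,s|)^{-(1+\delta)}$. I would integrate $\frac{\xi'}{\xi}(s)\,H(s)$ counterclockwise around the rectangle with vertices $-\varepsilon \pm iT$ and $1+\varepsilon \pm iT$, then let $T \to \infty$ through values avoiding zero ordinates. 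The residue theorem then identifies the sum of residues inside with $\sum_\rho h\bigl((\rho-\tfrac12)/i\bigr)$ (counted with multiplicity), while the two horizontal pieces vanish in the limit by the decay of $H$ combined with the classical bound $|\xi'/\xi(\sigma+it)| \ll (\log|t|)^2$ off zero ordinates.

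The functional equation $\xi(s) = \xi(1-s)$ yields $\frac{\xi'}{\xi}(s) = -\frac{\xi'}{\xi}(1-s)$; substituting $s \mapsto 1-s$ in the integral over $\mathrm{Re}\,s = -\varepsilon$ and combining with the one on $\mathrm{Re}\,s = 1+\varepsilon$ produces
\[
\sum_{\rho} h\!\left(\tfrac{\rho - 1/2}{i}\right) = \frac{1}{2\pi i} \int_{(1+\varepsilon)} \frac{\xi'}{\xi}(s) \bigl[H(s) + H(1-s)\bigr]\, \mathrm{d}s.
\]
Expanding $\frac{\xi'}{\xi}(s) = \frac{1}{s} + \frac{1}{s-1} + \frac{\Gamma_{\mathbb{R}}'}{\Gamma_{\mathbb{R}}}(s) + \frac{\zeta'}{\zeta}(s)$, I treat the $\zeta'/\zeta$ piece first: insert the Dirichlet series $-\frac{\zeta'}{\zeta}(s) = \sum_{n\ge 2}\Lambda(n)\,n^{-s}$ (absolutely convergent on the line), swap sum and integral, and for each $n$ shift the contour of $\frac{1}{2\pi i}\int_{(1+\varepsilon)} n^{-s}H(s)\,\mathrm{d}s$ (and the analogous integral with $H(1-s)$) to the real axis of the variable $u=(s-\tfrac12)/i$ using the analyticity of $h$. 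Each resulting integral collapses to $\frac{1}{2\sqrt{n}}\,\widehat{h}\bigl(\pm\tfrac{\log n}{2\pi}\bigr)$, which assembles the prime sum in \eqref{Explicit_Formula}.

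For the remaining terms $\frac{1}{s} + \frac{1}{s-1} + \frac{\Gamma_{\mathbb{R}}'}{\Gamma_{\mathbb{R}}}(s)$, I shift the contour from $\mathrm{Re}\,s = 1+\varepsilon$ to the critical line $\mathrm{Re}\,s = 1/2$, crossing only the simple pole at $s=1$; the residue contributes $H(1) + H(0) = h\bigl(\tfrac{1}{2i}\bigr) + h\bigl(-\tfrac{1}{2i}\bigr)$. On the critical line, with $s = \tfrac12+iu$, the factor $H(s)+H(1-s)$ becomes $h(u)+h(-u)$. Averaging the integral with its reflection under $u \mapsto -u$ (invoking Schwarz reflection $\Gamma_{\mathbb{R}}(\bar{s}) = \overline{\Gamma_{\mathbb{R}}(s)}$) replaces the bracketed factor by its real part; the real parts of $\frac{1}{1/2+iu}$ and $\frac{1}{-1/2+iu}$ cancel exactly, leaving just $\mathrm{Re}\,\frac{\Gamma_{\mathbb{R}}'}{\Gamma_{\mathbb{R}}}(\tfrac12+iu)$, and folding the even factor $h(u)+h(-u)$ yields the stated $\frac{1}{\pi}\int h(u)\,\mathrm{Re}\,\frac{\Gamma_{\mathbb{R}}'}{\Gamma_{\mathbb{R}}}(\tfrac12+iu)\,\mathrm{d}u$. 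The only real obstacles are technical bookkeeping: justifying (i) the vanishing of the horizontal segments and (ii) the Fubini-type exchange between the prime sum and the contour integral; both go through routinely from the decay hypothesis on $h$ and standard growth estimates for $\xi'/\xi$.
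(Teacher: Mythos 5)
Your contour-integration argument is a correct, self-contained derivation of the Guinand--Weil explicit formula, and it is essentially the standard proof underlying the sources the paper cites here (the paper itself offers no proof beyond referencing \cite[Lemma 1]{GG}; see also \cite[Theorem 5.12 / Eq.\ (25.10)]{IK}). The structure checks out: the functional equation folds the left edge onto the right; separating $\xi'/\xi$ as $\tfrac1s + \tfrac1{s-1} + \tfrac{\Gamma_\mathbb{R}'}{\Gamma_\mathbb{R}} + \tfrac{\zeta'}{\zeta}$ and shifting the non-$\zeta$ part across $s=1$ picks up the residue $H(1)+H(0) = h(\tfrac{1}{2i})+h(-\tfrac{1}{2i})$; the $u\mapsto -u$ reflection on the critical line, using $\overline{\Gamma_\mathbb{R}(\bar s)}=\Gamma_\mathbb{R}(s)$, replaces the $\tfrac1s+\tfrac1{s-1}+\tfrac{\Gamma_\mathbb{R}'}{\Gamma_\mathbb{R}}$ factor by its real part, under which the two Poisson kernels from $\tfrac1s$ and $\tfrac1{s-1}$ cancel exactly, leaving $\tfrac1\pi\int h(u)\,\mathrm{Re}\,\tfrac{\Gamma_\mathbb{R}'}{\Gamma_\mathbb{R}}(\tfrac12+iu)\,\mathrm{d}u$; and the Dirichlet series for $\zeta'/\zeta$, inserted on $\mathrm{Re}\,s=1+\varepsilon$ and shifted term-by-term, yields the prime sum. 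One small normalization slip: with $s=\tfrac12+iu$ one has $\tfrac{1}{2\pi i}\int_{(1/2)} n^{-s}H(s)\,\mathrm{d}s = \tfrac{1}{2\pi\sqrt{n}}\,\widehat h\!\left(\tfrac{\log n}{2\pi}\right)$, so the factor you wrote as $\tfrac{1}{2\sqrt n}$ should be $\tfrac{1}{2\pi\sqrt n}$; the final constant $\tfrac{1}{2\pi}$ in front of the prime sum then comes out correctly. Your invocation of the $(\log T)^2$ bound for $\zeta'/\zeta$ should be stated as holding for a suitably chosen $T$ in each unit interval (bounded away from zero ordinates), but that is the standard device and your decay hypothesis on $h$ then kills the horizontal edges.
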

\begin{proof}
This is \cite[Lemma 1]{GG}. For a similar formula, see \cite[Equation (25.10)]{IK}.
\end{proof}

We now deduce Theorem \ref{thm1} from Lemmas \ref{Lem2}, \ref{Lem3} and \ref{Lem4}.

\begin{proof}[Proof of Theorem \ref{thm1}]
Assume the Riemann hypothesis and that $\Delta \ge 1$ and $t \ge 10$. By Lemma \ref{Lem2} (i), we have
\begin{equation} \label{proof1}
0 \le \sum_\gamma \Big\{ R^+(\gamma) - \chi_{[-t,t]}(\gamma) \Big\} = \sum_\gamma R^+(\gamma) - 2 \, N(t),
\end{equation}
where the sum runs over the ordinates of the nontrivial zeros $\rho=\tfrac12+i\gamma$ of $\zeta(s)$. Note that $R^+(u)$ is even and satisfies the conditions of Lemma \ref{Lem4}. In fact, the growth condition required in Lemma \ref{Lem4} follows by Lemma \ref{Lem2} (iii)-(iv) via the Phragm\'{e}n-Lindel\"{o}f principle. Using inequality \eqref{proof1} in conjunction with Lemma \ref{Lem3} and \eqref{Explicit_Formula}, we derive that
\begin{equation} \label{proof2}
S(t) \le  \frac{1}{2\pi}   \int_{-\infty}^{\infty} \Big\{ R^+(u) \!-\! \chi_{[-t,t]}(u) \Big\} \,  \frac{\Gamma_{\mathbb{R}}'}{\Gamma_{\mathbb{R}}}(\tfrac{1}{2}\!+\!iu) \, \mathrm{d}u - \frac{1}{2\pi}  \sum_{n=2}^\infty \frac{\Lambda(n)}{\sqrt{n}}  \, \widehat{R}^+\!\left( \frac{\log n}{2\pi} \right) +  R^+\!\left(\frac{1}{2i}\right) -1.
\end{equation}
Lemma \ref{Lem2} (iii) implies that $R^+\!\left(\frac{1}{2i}\right) \ll  e^{\pi \Delta}$, while Lemma \ref{Lem2} (v)-(vi) imply that the sum over prime powers in \eqref{proof2} is
\[
 \ll \sum_{n \le e^{2\pi \Delta}} \frac{\Lambda(n)}{\sqrt{n}} \left\{ \frac{1}{\log n} + \frac{1}{\Delta} \right\} \ll \frac{1}{\Delta}e^{\pi \Delta}\,
\]
by the Prime Number Theorem and partial summation. Therefore
\begin{equation} \label{proof3}
S(t) \le  \frac{1}{2\pi}   \int_{-\infty}^{\infty} \Big\{ R^+(u) \!-\! \chi_{[-t,t]}(u) \Big\} \,  \frac{\Gamma_{\mathbb{R}}'}{\Gamma_{\mathbb{R}}}(\tfrac{1}{2}\!+\!iu) \, \mathrm{d}u + O\!\left( e^{\pi \Delta} \right).
\end{equation}
It remains to estimate the integral in \eqref{proof3}. Note that, apart from the gamma factor, this is precisely the integral that Beurling and Selberg set out to minimize. Stirling's formula for the gamma function implies that
\[
\frac{\Gamma_\mathbb{R}'}{\Gamma_\mathbb{R}}\big(\tfrac{1}{2}\pm iu \big) = \frac{1}{2} \log \frac{u}{2\pi} + O\Big(\frac{1}{u}\Big)
\]
as $u \to \infty$. Hence, by Lemma \ref{Lem2} (iv), since $\Delta \ge 1$, we see that
\[
 \int_{2t}^\infty \Big\{ R^+(u) \!-\! \chi_{[-t,t]}(u) \Big\} \,  \frac{\Gamma_{\mathbb{R}}'}{\Gamma_{\mathbb{R}}}(\tfrac{1}{2}\!+\!iu) \, \mathrm{d}u \ll \int_{2t}^\infty \frac{\log u}{\Delta^2 (u\!-\!t)^2} \, \mathrm{d}u \ll \frac{\log t}{\Delta^2 \, t} \ll 1.
\]
Similarly, we have
\[
\int_{-\infty}^{-2t} \Big\{ R^+(u) \!-\! \chi_{[-t,t]}(u) \Big\} \,  \frac{\Gamma_{\mathbb{R}}'}{\Gamma_{\mathbb{R}}}(\tfrac{1}{2}\!+\!iu) \, \mathrm{d}u \ll \frac{\log t}{\Delta^2 \, t} \ll 1.
\]
Thus, by Lemma \ref{Lem2} (ii), it follows that
\begin{equation*}
\begin{split}
\int_{-\infty}^{\infty} \Big\{ R^+(u) \!-\! \chi_{[-t,t]}(u) \Big\} \,  \frac{\Gamma_{\mathbb{R}}'}{\Gamma_{\mathbb{R}}}(\tfrac{1}{2}\!+\!iu) \, \mathrm{d}u &= \int_{-2t}^{2t} \Big\{ R^+(u) \!-\! \chi_{[-t,t]}(u) \Big\} \,  \frac{\Gamma_{\mathbb{R}}'}{\Gamma_{\mathbb{R}}}(\tfrac{1}{2}\!+\!iu) \, \mathrm{d}u +O\!\left(1\right)
\\
& \le \left( \frac{\log t}{2} + O(1) \right) \int_{-2t}^{2t} \Big\{ R^+(u) \!-\! \chi_{[-t,t]}(u) \Big\} \, \mathrm{d}u +O\!\left(1\right)
\\
& \le \left( \frac{\log t}{2} + O(1) \right) \int_{-\infty}^{\infty} \Big\{ R^+(u) \!-\! \chi_{[-t,t]}(u) \Big\} \, \mathrm{d}u +O\!\left(1\right)
\\
& = \frac{\log t}{2\Delta} + O\!\left(\frac{1}{\Delta} + 1 \right).
\end{split}
\end{equation*}
Combining the above estimates, we find that
\[
S(t) \le \frac{\log t}{4\pi\Delta} + O\!\left( e^{\pi \Delta} \right).
\]
Choosing $\pi \Delta = \log\log t - 2\log\log \log t$, we deduce that
\[
S(t) \le \frac{1}{4} \frac{\log t}{\log \log t} + O\!\left( \frac{\log t \log\log\log t}{(\log\log t)^2} \right).
\]
This is the upper bound implicit in Theorem \ref{thm1}. 

\smallskip

Similarly, starting with the observation that
\[
 \sum_\gamma R^-(\gamma) - 2 \, N(t) \le 0,
\]
we can use Lemmas \ref{Lem2}, \ref{Lem3} and \ref{Lem4} to derive the inequality 
\[
S(t) \ge -\frac{\log t}{4\pi\Delta} + O\!\left( e^{\pi \Delta} \right).
\]
Again, choosing $\pi \Delta = \log\log t - 2\log\log \log t$, it follows that
\[
S(t) \ge -\frac{1}{4} \frac{\log t}{\log \log t} + O\!\left( \frac{\log t \log\log\log t}{(\log\log t)^2} \right).
\]
This completes the proof of Theorem \ref{thm1}.
\end{proof}

A remarkable property of the Beurling-Selberg majorants and minorants for the characteristic function of an interval is that the $L^1(\mathbb{R})$-distance of the approximations, as described in Lemma \ref{Lem2} (ii), is independent of the length of the interval. For this reason, the above proof essentially works equally well in bounding the number of zeros of $\zeta(s)$ with ordinates in the intervals $[-t,t]$ as it does for bounding the number of zeros with ordinates in $[0,t]$. Therefore, since $\zeta(s)$ has exactly the same number of zeros with imaginary parts in the intervals $[-t,0]$ and $[0,t]$, we are able to use the Beurling-Selberg majorants and minorants to bound $S(t)-S(-t)=2S(t)$ just as sharply as we are able to bound $S(t)-S(0) = S(t)$. This is, apart from a few technical issues, the key point which enables us to improve Goldston and Gonek's bound for $S(t)$ in \eqref{GG} by a factor of 2. These observations also allow us to generalize the proof of Theorem \ref{thm1} to self-dual $L$-functions. However, in order to generalize Theorem \ref{thm1} to $L$-functions which are not self-dual, it seems that the techniques in our previous paper \cite{CCM} are necessary. We have chosen to give the proof for the zeros of $\zeta(s)$ separately, and in full detail, in order to illustrate the simplicity of the method.

\section{Extension to general $L$-functions}\label{L-functions}

In this section, we discuss how to prove an analogue of Theorem \ref{thm1} for a large class of $L$-functions \new{(see for instance \cite[Chapter 5]{IK})}.  Although our results are stated more generally, the $L$-functions we have in mind arise from automorphic representations of GL($m$) over a number field.

\smallskip 

Let $m$ be a natural number. For $\mathrm{Re}(s)>1$, we suppose that an $L$-function, $L(s,\pi)$, of degree $m$ is given by the absolutely convergent Dirichlet series and Euler product of the form
\[
L(s,\pi) = \sum_{n=1}^\infty \frac{\lambda_\pi(n)}{n^s} = \prod_p \prod_{j=1}^m \left(1-\frac{\alpha_{j,\pi}(p)}{p^s} \right)^{\!-1},
\]
where $\lambda_\pi(1)=1$, $\lambda_\pi(n)\in \mathbb{C}$, and the local parameters $\alpha_{j,\pi}(p)$ are complex numbers which satisfy
\new{\[
|\alpha_{j,\pi}(p)| \le p^{\vartheta}
\]
for a constant $0\le \vartheta \le 1$}. 
We define the completed $L$-function, $\Lambda(s,\pi)$, by
\[
\Lambda(s,\pi) = L(s,\pi_\infty) \, L(s,\pi),
\]
where
\[
L(s,\pi_\infty) = N^{s/2} \prod_{j=1}^m \Gamma_\mathbb{R}(s \!+\! \mu_j).
\]
Here, as above, $\Gamma_{\mathbb{R}}(s) = \pi^{-\frac{s}{2}} \Gamma(\tfrac{s}{2})$, the natural number $N$ denotes the conductor of $L(s,\pi)$, and the spectral parameters $\mu_j$ are complex numbers which are either real or come in conjugate pairs and are \new{assumed to satisfy the inequality $\mathrm{Re}\,(\mu_j) > -1$.}\footnote{If $L(s,\pi)$ arises from an irreducible cuspidal representation of GL($m$),  then by \cite{LRS} we have $\mathrm{Re}\,(\mu_j) \ge -\frac12 + \frac{1}{m^2+1}$.} We assume that the completed $L$-function $\Lambda(s,\pi)$ extends to a meromorphic function of order 1 in $\mathbb{C}$ with at most poles at $s=0$ and $s=1$, each of order less than or equal to $m$. We further assume that $\Lambda(s,\pi)$ satisfies the functional equation 
\[
\Lambda(s,\pi) = \kappa\, \Lambda(1\!-\!s,\tilde{\pi}),
\]
where the root number $\kappa$ is a complex number of modulus 1 and $\Lambda(s,\tilde{\pi}) = \overline{\Lambda(\bar{s},\pi)}.$ Hence, by our assumptions on the complex numbers $\mu_j$, we have $L(s,\tilde{\pi}) = \overline{L(\bar{s},\pi)}$ and $L(s,\tilde{\pi}_\infty) =L(s,\pi_\infty)$. If $L(s,\tilde{\pi})=L(s,\pi)$, then we say that $L(s,\pi)$ is {\it self-dual}. Note that, by the functional equation, the (possible) poles of $\Lambda(s,\pi)$ at $s=0$ and $s=1$ have the same order which we denote by $r(\pi)$. As stated above, we have $0\le r(\pi) \le m$.

\smallskip

For $t>0$, let $N(t,\pi)$ denote the number of zeros $\rho_\pi=\beta_\pi+i\gamma_\pi$ of $\Lambda(s,\pi)$ which satisfy $0\le \beta_\pi \le 1$ and $-t \le \gamma_\pi \le t$, where any zeros with $\gamma_\pi = \pm t$ are counted with weight $\tfrac12$.\footnote{ Note the difference between the definition of $N(t,\pi)$ and the definition for $N(t)$ given in the introduction. If $L(s,\pi)=\zeta(s)$, then $N(t,\pi) = 2 N(t)$.} When $t$ is not an ordinate of a zero of $\Lambda(s,\pi)$, a standard application of the argument principle gives
\new{\begin{equation} \label{general_N}
N(t,\pi) = \frac{1}{\pi} \int_{-t}^t \re \frac{L'}{L}\big(\tfrac{1}{2} \!+\! iu,\pi_\infty \big) \, \mathrm{d}u + S(t,\pi) + S(t,\tilde{\pi}) + 2\, r(\pi) + O(m),
\end{equation}}
where 
\begin{equation}\label{S def1}
S(t,\pi) = \frac{1}{\pi} \arg L\big(\tfrac{1}{2} \!+\! it,\pi \big) = - \frac{1}{\pi} \int_{1/2}^\infty \mathrm{Im} \frac{L'}{L}\big(\sigma \!+\! it,\pi \big) \, \mathrm{d}\sigma
\end{equation}
and the term $O(m)$ corresponds to the contribution of the poles of $L(s, \pi_{\infty})$ when $-1 < \re(\mu_j) \leq -\frac12$. Generically this contribution is equal to $-2\,\#\big\{\mu_j:\, -1 < \re(\mu_j) < -\frac12\big\} - \#\big\{\mu_j: \, \re(\mu_j) = -\frac12\big\}$.
If $t$ does correspond to an ordinate of a zero of $\Lambda(s,\pi)$, we define
\begin{equation}\label{S def2}
S(t,\pi) = \frac{1}{2} \lim_{\varepsilon \to 0 }\big\{ S(t\!+\!\varepsilon,\pi)+S(t\!-\!\varepsilon,\pi) \big\}.
\end{equation}
Then the formula in \eqref{general_N} holds for all $t > 0$. We now prove an analogue of Theorem \ref{thm1} for the functions $S(t,\pi)$ in terms of the quantity
\[
C(t,\pi) = C(\pi) \, \big(|t|+1\big)^m ,
\]
where $C(\pi)=C(0,\pi)$ is often called the {\it analytic conductor} of $\Lambda(s,\pi)$ and is defined by
\[
C(\pi) = N \, \prod_{j=1}^m \big(|\mu_j| + 3 \big).
\]

\begin{theorem} \label{thm2}
Assume the generalized Riemann hypothesis for $\Lambda(s,\pi)$. Then, for all $t > 0$, we have
\begin{equation} \label{2.1}
|S(t,\pi)| \, \le \, \new{\left( \frac{1}{4} + \frac{\vartheta}{2} \right) }\frac{\log C(t,\pi)}{\log\log [C(t,\pi)^{3/m}]} + O\!\left( \frac{\log C(t,\pi) \log \log \log  [C(t,\pi)^{3/m}]}{\big(\log\log [C(t,\pi)^{3/m}]\big)^2}\right)
\end{equation}
and, consequently, we have
\begin{equation} \label{2.2}
\underset{s=\frac12}{\mathrm{ord}} \ \Lambda(s,\pi) \, \le \, \new{\left( \frac{1}{2} + \vartheta \right)} \frac{\log C(\pi)}{\log\log [C(\pi)^{3/m}]} + O\!\left( \frac{\log C(\pi) \log \log \log [C(\pi)^{3/m}] }{\big(\log\log [C(\pi)^{3/m}]\big)^2}\right).
\end{equation}
The implied constants above are universal.
\end{theorem}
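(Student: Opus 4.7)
The plan is to adapt the argument of Section~3 to the general $L$-function setting. The first ingredient is the Guinand-Weil explicit formula for $\Lambda(s,\pi)$ (a standard generalization of Lemma~\ref{Lem4}, see for instance \cite[Chapter~5]{IK}): for a suitable test function $h$, it equates $\sum_{\rho_\pi} h\big((\rho_\pi-\tfrac12)/i\big)$ with the sum of the polar contributions at $s=0,1$, the Archimedean integral $\frac{1}{\pi}\int_{-\infty}^{\infty} h(u)\,\re\frac{L'}{L}(\tfrac12+iu,\pi_\infty)\,du$, and a prime power sum involving the coefficients $\Lambda_\pi(n)$ and $\Lambda_{\tilde\pi}(n)$ defined by $-L'/L(s,\pi) = \sum_n \Lambda_\pi(n) n^{-s}$. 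Assuming GRH for $\Lambda(s,\pi)$ and inserting $h=R^\pm$ (the Beurling-Selberg majorant/minorant of $\chi_{[-t,t]}$ with parameter $\Delta\ge 1$ from Lemma~\ref{Lem2}), the sandwich $\sum_{\gamma_\pi} R^-(\gamma_\pi)\le N(t,\pi)\le \sum_{\gamma_\pi} R^+(\gamma_\pi)$ combined with \eqref{general_N} reduces the problem to bounding
\[
\frac{1}{\pi}\int_{-\infty}^{\infty}\!\!(R^\pm - \chi_{[-t,t]})(u)\,\re\frac{L'}{L}(\tfrac12+iu,\pi_\infty)\,du \;-\; \frac{1}{2\pi}\re\sum_{n\ge 2}\frac{\Lambda_\pi(n)+\Lambda_{\tilde\pi}(n)}{\sqrt{n}}\widehat{R}^\pm\!\!\left(\frac{\log n}{2\pi}\right),
\]
plus an $O\big(m\,e^{\pi\Delta}+r(\pi)\big)$ error absorbing the polar contribution and the bound $R^\pm\big(1/(2i)\big)\ll e^{\pi\Delta}$ from Lemma~\ref{Lem2}(iii).

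I then estimate each piece. Using the Ramanujan-type bound $|\alpha_{j,\pi}(p)|\le p^{\vartheta}$, one has $|\Lambda_\pi(n)|\le m\, n^{\vartheta}\Lambda(n)$, and combining parts~(v)-(vi) of Lemma~\ref{Lem2} with the prime number theorem and partial summation yields that the prime power sum is $\ll m\,\Delta^{-1}\,e^{\pi\Delta(1+2\vartheta)}$; the factor $2\vartheta$ in the exponent is precisely what degrades the constant compared to Theorem~\ref{thm1}. Stirling's formula applied to $L(s,\pi_\infty)$ gives $\re\frac{L'}{L}(\tfrac12+iu,\pi_\infty)\le \tfrac12\log C(u,\pi)+O(m)$ away from the spectral parameters, and splitting the integral at $|u|=2t$ while using Lemma~\ref{Lem2}(ii) on the bulk (with $L^1$-mass $1/\Delta$) and Lemma~\ref{Lem2}(iv) on the tails (whose decay is integrable against $\log|u|$) produces an Archimedean contribution of size $\tfrac{1}{2\pi\Delta}\log C(t,\pi)+O(m/\Delta+1)$. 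Balancing the two estimates by the optimal choice $\pi\Delta(1+2\vartheta)=\log\log[C(t,\pi)^{3/m}]-2\log\log\log[C(t,\pi)^{3/m}]$ then yields
\[
|S(t,\pi)+S(t,\tilde\pi)|\le \left(\tfrac12+\vartheta\right)\frac{\log C(t,\pi)}{\log\log[C(t,\pi)^{3/m}]} + O\!\left(\frac{\log C(t,\pi)\,\log\log\log[C(t,\pi)^{3/m}]}{\big(\log\log[C(t,\pi)^{3/m}]\big)^2}\right),
\]
the normalization $C(t,\pi)^{3/m}$ being designed precisely to keep the $\log\log$ error meaningful uniformly in $m$.

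When $L(s,\pi)$ is self-dual we have $S(t,\pi)=S(t,\tilde\pi)$ and \eqref{2.1} follows upon dividing by~$2$. The main obstacle is the non-self-dual case: because the even Beurling-Selberg majorants $R^\pm$ cannot distinguish $\pi$ from $\tilde\pi$, the argument above only controls the sum $|S(t,\pi)+S(t,\tilde\pi)|$. To bound each term individually one must follow the strategy of \cite{CCM}: establish the representation $S(t,\pi)=\frac{1}{\pi}\sum_{\gamma_\pi} F(t-\gamma_\pi)+O(1)$ with $F$ as in \eqref{Def_F} (the derivation for $\zeta(s)$ in \cite{CCM} generalizes verbatim to $L(s,\pi)$), substitute in the optimal real entire one-sided approximations of $F$ of prescribed exponential type constructed by Carneiro and Littmann \cite{CL}, and apply the explicit formula; the resulting Archimedean and prime power estimates are analogous to those above and deliver \eqref{2.1} with the constant $\tfrac14+\tfrac{\vartheta}{2}$. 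Finally, \eqref{2.2} is immediate from \eqref{2.1}: if $\Lambda(s,\pi)$ has a zero of multiplicity $k$ at $s=\tfrac12$, then for all sufficiently small $t>0$ not corresponding to an ordinate of any other zero one has $N(t,\pi)=k$, and the limit $t\to 0^+$ in \eqref{general_N} combined with the bound for $|S(t,\pi)+S(t,\tilde\pi)|$ derived above yields \eqref{2.2} with the doubled constant $\tfrac12+\vartheta$.
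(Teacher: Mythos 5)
Your proposal correctly reproduces the paper's two-pronged strategy: the self-dual case via the Beurling--Selberg sandwich $R^-\le\chi_{[-t,t]}\le R^+$ and the explicit formula, and the general case via the representation of $S(t,\pi)$ as a sum over zeros combined with the extremal one-sided approximants of Carneiro and Littmann. A few details are glossed over relative to the paper: the error in the sum-over-zeros representation must be $O(m)$ rather than $O(1)$, which matters for uniformity in the degree; the kernel is actually the dilated function $G(x)=\arctan(2/x)-2x/(4+x^2)=F(x/2)$ (since the paper integrates $\sigma$ over $[\tfrac12,\tfrac52]$), with the extremal functions of \cite{CL} scaled correspondingly; and the Archimedean integrals require a Poisson-kernel estimate near each spectral parameter $\mu_j$ because $\mathrm{Re}(\mu_j)$ may approach $-\tfrac12$, so the generalization of the \cite{CCM} derivation is not quite verbatim. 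These are refinements rather than obstructions, and your overall outline matches the paper's proof.
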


\noindent{\sc Remarks.} 

(i) If $\pi$ is an irreducible cuspidal representation of GL($m$) over a number field, then Luo, Rudnick and Sarnak \cite{LRS} have shown that  $\vartheta \le \frac{1}{2} - \frac{1}{m^2+1}$. This bound can be improved when $m\le 4$ (see \cite{BB}). For instance, it is known that $\vartheta_2 =\frac{7}{64}$ from the work of Kim and Sarnak \cite{KS} and the corresponding generalization of their bound to number fields due to Nakasuji \cite{N} and to Blomer and Brumley \cite{BB}. The generalized Ramanujan-Petersson conjecture asserts that  $\vartheta=0$ for all $m\ge 1$. This conjecture is trivially true for the Riemann zeta-function and primitive Dirichlet $L$-functions, and is known for $L$-functions attached to classical holomorphic modular forms due to the work of Deligne \cite{D} and of Deligne and Serre \cite{DS}. 

\smallskip

(ii) If $L(s,\pi)$ is the Riemann zeta-function, then we recover Theorem \ref{thm1} from \eqref{2.1}. Further examples are discussed in Section \ref{examples}. 

\smallskip

(iii) If $L(s,\pi)$ is an $L$-function attached to an elliptic curve over $\mathbb{Q}$, then the estimate in \eqref{2.2} recovers the well-known result of Brumer \cite{B} on the analytic rank of an elliptic curve in terms of its conductor. In general, the estimate in \eqref{2.2} gives a sharpened form of \cite[Proposition 5.21]{IK}.

\smallskip

(iv)  By modifying the approach in \cite{CCM}, the analogue of the bound in \eqref{2.1} for the antiderivative 
\[
S_1(t,\pi) = \int_0^t S(u,\pi) \, \mathrm{d}u - \frac{1}{\pi} \int_{1/2}^\infty \log|L(\sigma,\pi)| \, \mathrm{d}\sigma
\]
is considered in \cite{Fin}.

\begin{proof}[Proof of Theorem \ref{thm2}]
By \eqref{S def1} and \eqref{S def2}, it suffices to prove the inequality \eqref{2.1} in the case where $t$ does not correspond to an ordinate of a zero of $\Lambda(s,\pi)$. We start by recording the explicit formula for $\Lambda(s,\pi)$, which can be established by modifying the proof of \cite[Theorem 5.12]{IK}. For functions $h$ satisfying the conditions in Lemma \ref{Lem4}, we have
 \begin{align}\label{Exp_Form_L}
\begin{split}
\sum_{\rho_\pi} h\!\left(\frac{\rho_\pi\!-\!\tfrac12}{i}\right) &= r(\pi) \left\{h\left(\frac{1}{2i} \right) +h\left(-\frac{1}{2i}\right) \right\} + \frac{1}{\pi}  \int_{-\infty}^{\infty} h(u) \, \mathrm{Re} \frac{L'}{L}\big(\tfrac{1}{2} \!+\! iu,\pi_\infty \big) \, \mathrm{d}u 
\\
& \qquad  \ \ \  - \frac{1}{2\pi}\sum_{n=2}^{\infty} \frac{1}{\sqrt{n}}\left\{ \Lambda_\pi(n)\, \widehat{h}\left(\frac{\log n}{2\pi }\right) + \Lambda_{\tilde{\pi}}(n)\, \widehat{h}\left( \frac{-\log n}{2\pi }\right)\right\}\\
& \qquad  \ \ \ - \sum_{-1<\re(\mu_j)< -\frac12} \left[ h\!\left(\frac{-\mu_j- \tfrac12}{i}\right)  + h\!\left(\frac{\tfrac12 + \mu_j}{i}\right)\right] \\
& \qquad \ \ \ - \frac{1}{2} \sum_{\re(\mu_j) =  -\frac12} \left[ h\!\left(\frac{-\mu_j- \tfrac12}{i}\right)  + h\!\left(\frac{\tfrac12 + \mu_j}{i}\right)\right],
\end{split}
\end{align}

where the sum runs over the zeros $\rho_\pi=\beta_\pi+i\gamma_\pi$ of $\Lambda(s,\pi)$ and the coefficients $\Lambda_\pi(n)$, which are supported on primes powers, are defined via the Dirichlet series 
\[
-\frac{\mathrm{d}}{\mathrm{d}s} \log L(s,\pi) = -\frac{L'}{L}(s,\pi) = \sum_{n=1}^\infty \frac{\Lambda_\pi(n)}{n^s}
\]
which converges absolutely for $\mathrm{Re}(s)>1$. Logarithmically differentiating the Euler product, it can be shown that 
\new{\begin{equation} \label{coeff}
|\Lambda_\pi(n)| \le m \, \Lambda(n)\, n^{\vartheta},
\end{equation}}
where $\Lambda(n)$ is the von Mangoldt function defined in Lemma \ref{Lem4}.

\subsubsection*{The self-dual case} We first handle the case where $L(s,\pi)$ is self-dual to show how our proof for the zeros of $\zeta(s)$ in the previous section can be extended. \new{Let $\Delta \geq 1$ and consider the majorant $R^+$ given by Lemma \ref{Lem2}}. By Lemma \ref{Lem2} (i), assuming the generalized Riemann hypothesis for $\Lambda(s,\pi)$, we have
\[ 
0 \le \sum_{\gamma_\pi} \Big\{ R^+(\gamma_\pi) - \chi_{[-t,t]}(\gamma_\pi) \Big\} = \sum_{\gamma_{\pi}} R^+(\gamma_\pi) -  N(t,\pi).
\]
Since $S(t,\pi)=S(t,\tilde{\pi})$ and $R^+(z)$ is an even function, the formula for $N(t,\pi)$ in \eqref{general_N} and the explicit formula \new{in \eqref{Exp_Form_L}} imply that
\begin{equation*}
\begin{split}
S(t,\pi) \, &\le \,   \frac{1}{2\pi} \int_{-\infty}^{\infty} \left\{ R^+(u) - \chi_{[-t,t]}(u) \right\} \mathrm{Re} \frac{L'}{L}\big(\tfrac{1}{2} \!+\! iu,\pi_\infty \big) \, \mathrm{d}u  
\\
&\qquad \qquad - \frac{1}{2\pi}  \! \sum_{n \le e^{2\pi\Delta}} \frac{ \Lambda_\pi(n)}{\sqrt{n}} \, \widehat{R^+}\!\left(\frac{\log n}{2\pi }\right) \, + \, r(\pi) \left\{ R^+\!\left( \frac{1}{2i}\right) -1\right\} + \new{O\left( m e^{\pi \Delta}\right)}.
\end{split}
\end{equation*}
\new{In the computation above, the last two sums of \eqref{Exp_Form_L} are accounted in the term $O\left( m e^{\pi \Delta}\right)$ due to Lemma \ref{Lem2} (iii).} Using parts (ii), (iii) and (vi) of Lemma \ref{Lem2}, along with the estimate in \eqref{coeff}, it follows that
\begin{align*}
S(t,\pi) &  \le \, \frac{1}{2\pi} \int_{-\infty}^{\infty} \Big\{ R^+(u) - \chi_{[-t,t]}(u) \Big\} \, \mathrm{Re} \frac{L'}{L}\big(\tfrac{1}{2} \!+\! iu,\pi_\infty \big) \, \mathrm{d}u  + \new{O\!\left(m \, e^{\pi (1+2 \, \vartheta) \Delta} \right)}\\
& = \frac{1}{4\pi \Delta}\log\left(\frac{N}{\pi^m}\right) + \frac{1}{4\pi} \sum_{j=1}^m \int_{-\infty}^{\infty} \! \Big\{ R^+(u) - \chi_{[-t,t]}(u) \Big\} \,  \re \frac{\Gamma'}{\Gamma} \! \left(\frac{\tfrac 12 \!+\! iu \!+\! \mu_j}{2}\right)\du + \new{O\!\left(m \, e^{\pi (1+2 \, \vartheta) \Delta} \right)}.
\end{align*}
To estimate each integral term in the sum above, it is convenient to use Stirling's formula in the form
\begin{equation}\label{Stirling1}
\frac{\Gamma'}{\Gamma}(s) = \log(s \!+\! 1) - \frac{1}{s} + O(1),
\end{equation}
\new{which is valid for $\mathrm{Re} \, (s) > -\frac12$, say. Since $\mathrm{Re} \, (\mu_j) > -1$ for each $j$,  we note that $\mathrm{Re} \, (\tfrac 12 + iu + \mu_j) > -\frac12$}. Also, for each $j$, we break the integral into three ranges by writing 
\[
\int_{-\infty}^{\infty} \!\!\Big\{ R^+(u) - \chi_{[-t,t]}(u) \Big\} \, \mathrm{Re} \, \frac{\Gamma'}{\Gamma} \left(\frac{\tfrac 12 \!+\! iu \!+\! \mu_j}{2}\right)\du = I_1 + I_2 + I_3,
\]
where the range of integration in $I_1$ is over the interval $(-\infty, -(|t|+1)(|\mu_j|+3))$, the range of integration in $I_2$ is over the interval $[-(|t|+1)(|\mu_j|+3), (|t|+1)(|\mu_j|+3)]$, and the range of integration in $I_3$ is over the interval $((|t|+1)(|\mu_j|+3),\infty)$.

To estimate $I_3$, we use \eqref{Stirling1} and Lemma \ref{Lem2} (iv) to derive that
\begin{align*}
I_3 \ll \int_{(|t|+1)(|\mu_j|+3)}^{\infty} \frac{1}{\Delta ^2 (u-t)^2} \big( \log u + O(1)\big)\,\du \ll \frac{1}{\Delta^2}\frac{\log [(|t|+1)(|\mu_j|+3)]}{(|t|+1)(|\mu_j|+3)}.
\end{align*}
The same bound holds for $I_1$. It remains to estimate $I_2$. If we write $\mu_j = a_j + ib_j$, then some care is needed to handle the range of integration over the interval $J=[-b_j-1, -b_j + 1]$ since it may be the case that $a_j$ \new{is close or equal to $-\tfrac12$}. \new{We can estimate the contribution from the interval $J$ to $I_2$ by using \eqref{Stirling1} and observing that the quantity
\begin{equation}\label{Deal_re_part}
\mathrm{Re} \left(-\frac{1}{\tfrac12 + iu + \mu_j}\right) \, = \, -\frac{(\tfrac12 + a_j)}{(\tfrac12 + a_j)^2 + (u + b_j)^2}
\end{equation}
is integrable in $u$, since it is a Poisson kernel (in case $a_j = -\frac12$ this quantity is actually zero), and using the fact that $R^+(u) - \chi_{[-t,t]}(u)$ is uniformly bounded. From this observation, we see that the contribution to $I_2$ from the interval $J$ is $O(1)$.} Therefore, letting $I =[-(|t|+1)(|\mu_j|+3), (|t|+1)(|\mu_j|+3)]$, we have
\begin{align*}
I_2 & \, \leq \, \int_{I-J}  \Big\{ R^+(u) - \chi_{[-t,t]}(u) \Big\} \, \log\big[(|t|+1)(|\mu_j|+3) + \tfrac12 + |\mu_j|\big] \,\du + O(1)\\
& \, \leq \, \int_{-\infty}^{\infty} \Big\{ R^+(u) - \chi_{[-t,t]}(u) \Big\} \, \log \big[(|t|+1)(|\mu_j|+3)\big] \,\du + O(1)\\
& \, = \, \frac{\log[(|t|+1)(|\mu_j|+3)]}{\Delta} + O(1).
\end{align*}
Combining these estimates, we arrive at the inequality
\[
S(t,\pi) \, \le \,  \frac{\log C(t,\pi)}{4\pi \Delta} + O\!\left(\frac{\log C(t,\pi)}{ \Delta^2 }\right)  + \new{O\!\left(m \, e^{\pi (1+2 \, \vartheta) \Delta} \right)}.
\]
Choosing \new{$\pi \, (1\!+\! 2 \,\vartheta) \, \Delta = \log \log C(t,\pi)^{3/m} - 2 \log \log \log C(t,\pi)^{3/m}$}, we derive the upper bound implicit in \eqref{2.1}.\footnote{The power of $3/m$ can be replaced by $a/m$ for any fixed constant $a>e$ (so that $\log \log a > 0$).} 

Similarly, starting with the observation that
\new{\[
 \sum_{\gamma_\pi} R^-(\gamma_\pi) -  \, N(t,\pi) \le 0,
\]}
we can derive the inequality 
\[
S(t,\pi) \, \ge \,  -\frac{\log C(t,\pi)}{4\pi \Delta} + O\!\left(\frac{\log C(t,\pi)}{ \Delta^2 }\right)  + \new{O\!\left(m \, e^{\pi (1+2 \, \vartheta) \Delta} \right)}.
\]
Again choosing \new{$\pi \, (1 + 2 \,\vartheta) \, \Delta = \log \log C(t,\pi)^{3/m} - 2 \log \log \log C(t,\pi)^{3/m}$},  we can establish the lower bound implicit in \eqref{2.1}. This completes the proof of the bound for $S(t,\pi)$ in Theorem \ref{thm2} in the case where $L(s,\pi)$ is self-dual. We note that if $L(s,\pi)$ is not self-dual, then the above proof leads to the inequality
\[
\big|S(t,\pi) + S(t,\tilde\pi)  \big|  \, \le \, \new{\left( \frac{1}{2} + \vartheta \right)} \frac{\log C(t,\pi)}{\log\log [C(t,\pi)^{3/m}]} + O\!\left( \frac{\log C(t,\pi) \log \log \log  [C(t,\pi)^{3/m}]}{\big(\log\log [C(t,\pi)^{3/m}]\big)^2}\right),
\]
and it does not seem possible to derive the inequality in \eqref{2.1} from this estimate.

\subsubsection*{The general case} In order to prove \eqref{2.1} for general $L$-functions (not necessarily self-dual), we follow our alternative approach described in \cite{CCM}.  Note that 
\begin{equation}\label{Exp_S_L}
\begin{split}
S(t,\pi) 
&= - \frac{1}{\pi} \int_{1/2}^{5/2} \mathrm{Im} \frac{L'}{L}\big(\sigma \!+\! it,\pi \big) \, \mathrm{d}\sigma + O(m) 
\\
&=  - \frac{1}{\pi} \int_{1/2}^{5/2} \im \left\{ \frac{L'}{L} (\sigma \!+\! i t, \pi) - \frac{L'}{L} (\tfrac52 \!+\! i t, \pi) \right\} \d\sigma + O(m).
\end{split}
\end{equation}
This estimate follows from \eqref{S def1}  and the coefficient bound in \eqref{coeff} since
\begin{equation*}
\left|\int_{5/2}^\infty \mathrm{Im} \frac{L'}{L}\big(\sigma \!+\! it,\pi \big) \, \mathrm{d}\sigma\right| \leq \int_{5/2}^{\infty}\sum_{n \ge 1} \frac{|\Lambda_{\pi}(n)|}{n^{\sigma}} \, \mathrm{d}\sigma = \sum_{n \ge 1} \int_{5/2}^{\infty} \frac{|\Lambda_{\pi}(n)|}{n^{\sigma}} \, \mathrm{d}\sigma = \sum_{n \ge 1} \frac{|\Lambda_{\pi}(n)|}{n^{5/2}\log n} = O(m).
\end{equation*}
To estimate the second integral on the right-hand side of \eqref{Exp_S_L}, we use the partial fraction decomposition \cite[Equation (5.24)]{IK}
\begin{align}\label{Eq23}
- \frac{L'}{L}(s,\pi) = \frac12\log\left(\frac{N}{\pi^m}\right)+ \frac12 \, \sum_{j=1}^m \, \frac{\Gamma'}{\Gamma}\!\left( \frac{s + \mu_j}{2}\right) - B_\pi + \frac{r(\pi)}{s} + \frac{r(\pi)}{s-1} - \sum_{\rho_{\pi}} \left( \frac{1}{s-\rho_{\pi}} + \frac{1}{\rho_{\pi}}\right)
\end{align}
where $B_\pi$ is a complex number. Now insert \eqref{Eq23} into \eqref{Exp_S_L}. Evidently, the constant terms cancel. Moreover, it is not difficult to estimate the other terms which do not involve the zeros of $\Lambda(s,\pi)$.  Writing $s = \sigma + it$ with $\tfrac12 \leq \sigma \leq \tfrac52$, we have 
\[
\mathrm{Im} \left(\frac{r(\pi)}{s}\right) \, = \, O(m)
\]
and
\[
\mathrm{Im} \left(\frac{r(\pi)}{s-1}\right) \, = \, -r(\pi)\left[\frac{t}{(\sigma \!-\! 1)^2 + t^2}\right].
\]
The function on the right-hand side of the second expression is integrable in $\sigma$ as long as $t \neq 0$ (in fact, it is integrable from $\sigma=-\infty$ to $\sigma=\infty$ since it is a Poisson kernel). Thus, the contributions from these terms to \eqref{Exp_S_L} is $O(m)$. We use \eqref{Stirling1} to handle the terms involving the gamma factors. For $\tfrac12 \leq \sigma \leq \tfrac52$, we have \new{$\mathrm{Re} \, (s + \mu_j) > -\frac12$}. Thus, for $|s + \mu_j| >2$, it follows that
\[
\mathrm{Im} \, \frac{\Gamma'}{\Gamma}\left( \frac{s + \mu_j}{2}\right) \, = \, \new{\mathrm{Im} \,  \log \left( \frac{s + \mu_j}{2} + 1\right)} + O(1) \, = \, O(1).
\]
If $|s + \mu_j| \leq 2$, we write $\mu_j = a_j + ib_j$ and observe that
\[
\mathrm{Im} \, \frac{\Gamma'}{\Gamma}\left( \frac{s + \mu_j}{2}\right) \, = \, -\mathrm{Im}  \left( \frac{2}{s + \mu_j}\right) + O(1) \, = \, \frac{2 \, (t+b_j)}{(\sigma + a_j)^2 + (t + b_j)^2}+  \new{O(1).}
\]
Again, this is integrable in $\sigma$ since it is a Poisson kernel (and if $t = -b_j$ it is simply equal to $0$). Summing each of these factors from $j =1$ to $j=m$ we get another term of size $O(m)$. It therefore follows that
\begin{equation} \label{S_L_formula}
\begin{split}
S(t,\pi) &=  - \frac{1}{\pi} \int_{1/2}^{5/2} \im\left( \frac{L'}{L} (\sigma + it, \pi) - \frac{L'}{L} \big(\tfrac{5}{2} + it, \pi \big)\right)  \d\sigma + O(m)\\
&  = \frac{1}{\pi} \int_{1/2}^{5/2} \sum_{\gamma_{\pi}}\left\{ \frac{(t - \gamma_{\pi})}{(\sigma - \hh)^2 + (t - \gamma_{\pi})^2} - \frac{(t - \gamma_{\pi})}{4 + (t - \gamma_{\pi})^2}\right\} \d\sigma+ O(m)\\
&  = \frac{1}{\pi}\sum_{\gamma_{\pi}} \int_{1/2}^{5/2} \left\{ \frac{(t - \gamma_{\pi})}{(\sigma - \hh)^2 + (t - \gamma_{\pi})^2} - \frac{(t - \gamma_{\pi})}{4 + (t - \gamma_{\pi})^2}\right\} \d\sigma+ O(m)\\
& = \frac{1}{\pi} \sum_{\gamma_{\pi}} \left\{ \arctan \left( \frac{2}{(t-\gamma_{\pi})}\right) - \frac{2 \, (t-\gamma_{\pi})}{4 + (t - \gamma_{\pi})^2}\right\} + O(m)
\\
&  = \frac{1}{\pi}  \sum_{\gamma_{\pi}} G(t-\gamma_{\pi}) + O(m),
\end{split}
\end{equation}
where $G(x) = \arctan(2/x) - 2x/(4+x^2)$. We now use the extremal functions $m_{\Delta}^{\pm}(x)$ of exponential type $2\pi \Delta$ constructed in \cite[Lemma 6]{CCM}.\footnote{Note that $G(x) = F(x/2)$ for $F$ defined in \eqref{Def_F}. The extremal functions in \cite[Lemma 6]{CCM} must be dilated accordingly.} For $\Delta \geq 1$, these are real entire functions satisfying the following properties:
\begin{enumerate}
\item[(P1)] For all real $x$ we have
\[
-\frac{C}{1+x^2}\leq m_{\Delta}^-(x) \leq G(x) \leq m_{\Delta}^+(x) \leq \frac{C}{1+x^2}\,,
\]
where $C$ is a universal constant and, for all complex $z$, we have
\[
\big|m_{\Delta}^{\pm}(z)\big| \ll \frac{\Delta^2}{1 + \Delta |z|} e^{2 \pi \Delta |\im(z)|}.
\]

\item[(P2)] We have $\supp\big(\widehat{m}_{\Delta}^{\pm}\big) \subset[-\Delta, \Delta]$ and $\widehat{m}_{\Delta}^{\pm}(\xi) \ll 1$ for all $\xi \in [-\Delta, \Delta]$.
\smallskip
\item[(P3)] The $L^1(\R)$-distances to $G(x)$ are minimized, with values given by
\[
\int_{-\infty}^{\infty} \Big\{ m_{\Delta}^+(x) - G(x) \Big\} \,\dx \, = \, \int_{-\infty}^{\infty} \Big\{G(x) - m_{\Delta}^-(x) \Big\} \,\dx \, = \, \frac{\pi}{2\Delta}.
\]
\end{enumerate}
By \eqref{S_L_formula} and (P1), we have
\begin{equation*}
S(t,\pi) \, \leq \, \frac{1}{\pi}  \sum_{\gamma_{\pi}} m_{\Delta}^+(t-\gamma_{\pi}) + O(m).
\end{equation*}
Using the explicit formula \eqref{Exp_Form_L} with $h(z) = m_{\Delta}^+(t-z)$, the properties (P1) - (P3) above imply that
\begin{equation*}
S(t,\pi) \, \leq \, \frac{1}{4\pi \Delta}\log\left(\frac{N}{\pi^m}\right)  + \frac{1}{2\pi^2} \sum_{j=1}^m \int_{-\infty}^{\infty} m_{\Delta}^+(u)\, \re \frac{\Gamma'}{\Gamma} \left(\frac{\tfrac 12 + it - iu + \mu_j}{2}\right)\du + \new{O\!\left(m \,\Delta\, e^{\pi (1+2 \, \vartheta) \Delta} \right).}
\end{equation*}
For each $j$, we estimate the corresponding integral term using \eqref{Stirling1}. If $|\tfrac 12 + it - iu + \mu_j| \le 2$, we deal with this real part as in \eqref{Deal_re_part} and the contribution is $O(1)$. If $|\tfrac 12 + it - iu + \mu_j| > 2$, then the main term from \eqref{Stirling1} is equal to
\begin{equation*}
\begin{split}
\log \left(\tfrac32 + it - iu +\mu_j \right) \, &= \, \log \left(|it +\mu_j|+3\right) + \log \left(\frac{ \tfrac32 + it - iu +\mu_j}{(|it + \mu_j|+3)} \right)
\\
 \, &= \, \log \left(|it +\mu_j|+3\right) + O\big(\log(|u|+2)\big).
\end{split}
\end{equation*}
Integrating this against the majorant $m_\Delta^+(u)$ gives a term of size\footnote{\new{Note that this allows a slightly sharper version of Theorem \ref{thm2}, with conductor defined by
$$\widetilde{C}(t,\pi) =  N \, \prod_{j=1}^m \big(|it + \mu_j| + 3 \big).$$}}
\[
\frac{\pi}{2\Delta}  \log \left(|it +\mu_j|+3\right)+ O(1) \, \leq \, \frac{\pi}{2\Delta} \log \left[(|t|+1)(|\mu_j|+3)\right] + O(1).
\]
Summing over $j$ and combining estimates, we arrive at the inequality
\new{\[
S(t,\pi) \, \le \,  \frac{\log C(t,\pi)}{4\pi \Delta}  + O\!\left(m \, \Delta\, e^{\pi (1+2 \, \vartheta) \Delta} \right).
\]}
Choosing $\new{\pi \, (1\!+\! 2 \,\vartheta) \, \Delta} = \log \log C(t,\pi)^{3/m} - 3 \log \log \log C(t,\pi)^{3/m}$, we derive the upper bound implicit in \eqref{2.1}. The proof of the lower bound for $S(t,\pi)$  implicit in \eqref{2.1} is analogous.

\subsubsection*{Proof of \eqref{2.2}} Finally, the estimate in \eqref{2.2} follows from \eqref{general_N} and  \eqref{2.1} by noting that 
\[
\lim_{t\to 0} \, N(t,\pi) \, = \, \underset{s=\frac12}{\mathrm{ord}} \ \Lambda(s,\pi).
\]
This completes the proof of Theorem \ref{thm2}.
\end{proof}

The bound for $S(t,\pi)$ in Theorem \ref{thm2} can \new{be} improved in $t$-aspect for automorphic $L$-functions. In particular, let $\pi$ be an irreducible unitary cuspidal representation of $\mathrm{GL}(m)$ over $\mathbb{Q}$ and let $L(s,\pi)$ be the corresponding $L$-function. Setting $\Lambda_\pi(n) = \Lambda(n) a_\pi(n)$ and using the fact that the Rankin-Selberg $L$-function $L(s,\pi \times \tilde{\pi})$ has a simple pole at $s=1$, Rudnick and Sarnak \cite[Section 2.4]{RS} observed that
\[
\sum_{n\le x} \frac{\Lambda(n) |a_\pi(n)|^2 \log n}{n} \, \sim \, \frac{\log^2x}{2} 
\]
as $x \to \infty$.  Since 
\[
\sum_{n\le x} \frac{|\Lambda_\pi(n)|^2}{n} \le \sum_{n\le x} \frac{\Lambda(n) |a_\pi(n)|^2 \log n}{n}
\]
and the coefficients $\Lambda_\pi(n)$ are supported on prime powers, an application of Cauchy's inequality shows that
\[
 \sum_{n \le x} \frac{|\Lambda_\pi(n)|}{\sqrt{n}} = O\!\left( \sqrt{ x \log x} \, \right),
\]
where the implied constant depends on $\pi$ (and thus on $C(\pi)$ and $m$). Using this estimate in the above proof, we can derive the following result.

\begin{theorem}
Let $\pi$ be an irreducible unitary cuspidal representation of $\mathrm{GL}(m)$ over $\mathbb{Q}$ and let $L(s,\pi)$ be the corresponding $L$-function. Then, assuming the generalized Riemann hypothesis for $L(s,\pi)$, we have
\[
|S(t,\pi)| \, \le \, \frac{m}{4}\frac{\log t}{ \log \log t} + O\!\left( \frac{\log t \log\log \log t}{(\log\log t)^2} \right)
\]
when $t$ is sufficiently large, where the implied constant depends on $m$ and on the representation $\pi$.
\end{theorem}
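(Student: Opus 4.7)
The plan is to adapt the general-case argument from the proof of Theorem~\ref{thm2} almost verbatim, with the key modification being the use of the Rankin--Selberg-based coefficient estimate $\sum_{n \le x}|\Lambda_\pi(n)|/\sqrt{n} \ll_\pi \sqrt{x \log x}$ (displayed just before the theorem) to control the prime power sum in the explicit formula, replacing the pointwise bound $|\Lambda_\pi(n)| \le m\Lambda(n)n^{\vartheta}$ that produced the factor $(1+2\vartheta)$ in the exponential error term of Theorem~\ref{thm2}. Because the implied constants are now permitted to depend on both $m$ and $\pi$, the analytic conductor simplifies nicely: $\log C(t,\pi) = m\log t + O_\pi(1)$ and $\log\log C(t,\pi)^{3/m} = \log\log t + O_\pi(1)$.

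Starting from the representation $S(t,\pi) = \frac{1}{\pi}\sum_{\gamma_\pi} G(t-\gamma_\pi) + O(m)$ established in \eqref{S_L_formula}, I majorize by $m_\Delta^+(t-\gamma_\pi)$ via property (P1) and apply the explicit formula \eqref{Exp_Form_L} with $h(z) = m_\Delta^+(t-z)$. The gamma factor and conductor contribution is handled exactly as in the proof of Theorem~\ref{thm2}, yielding $\frac{\log C(t,\pi)}{4\pi\Delta} + O_\pi(1/\Delta) = \frac{m\log t}{4\pi\Delta} + O_\pi(1/\Delta)$. For the prime power sum, property (P2) gives $\widehat{m_\Delta^+}$ supported in $[-\Delta,\Delta]$ with $\bigl|\widehat{m_\Delta^+}(\xi)\bigr| \ll 1$, so
\[
\frac{1}{2\pi}\sum_{n\ge 2}\frac{|\Lambda_\pi(n)|+|\Lambda_{\tilde\pi}(n)|}{\sqrt{n}}\,\Bigl|\widehat{m_\Delta^+}\bigl(\tfrac{\log n}{2\pi}\bigr)\Bigr| \;\ll\; \sum_{n\le e^{2\pi\Delta}}\frac{|\Lambda_\pi(n)|}{\sqrt{n}} \;\ll_\pi\; e^{\pi\Delta}\sqrt{\Delta},
\]
where I have applied the Rankin--Selberg estimate to both $\pi$ and $\tilde\pi$ (both being cuspidal on $\mathrm{GL}(m)$ over $\mathbb{Q}$).

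Combining the above bounds gives $S(t,\pi) \le \frac{m\log t}{4\pi\Delta} + O_\pi\!\left(e^{\pi\Delta}\sqrt{\Delta}\right)$. Choosing $\pi\Delta = \log\log t - \tfrac{5}{2}\log\log\log t$ (for example) balances the two contributions and yields the stated upper bound. The matching lower bound follows by the same argument using the minorant $m_\Delta^-$ in place of $m_\Delta^+$. There is no serious obstacle here; the only delicate point is the use of the Rankin--Selberg input, which is precisely what converts the factor $(1+2\vartheta)$ in the exponent into the single factor $1$, thereby recovering the leading constant $m/4$ rather than $(1+2\vartheta)m/4$. The remaining steps are routine checks that the additional $\sqrt{\Delta}$ factor appearing in the Cauchy--Schwarz derivation of the coefficient bound is harmlessly absorbed into the secondary error term with the chosen $\Delta$.
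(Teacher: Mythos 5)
Your proposal is correct and matches the paper's intended argument, which is only sketched in the text ("Using this estimate in the above proof, we can derive the following result"). You correctly identify that the single substantive change from the general case of Theorem~\ref{thm2} is replacing the pointwise bound $|\Lambda_\pi(n)| \le m\,\Lambda(n)\,n^\vartheta$ by the averaged Rankin--Selberg/Cauchy--Schwarz bound $\sum_{n\le x}|\Lambda_\pi(n)|/\sqrt{n} \ll_\pi \sqrt{x\log x}$, which caps the prime-power sum at $O_\pi(e^{\pi\Delta}\sqrt{\Delta})$ rather than $O(m\,\Delta\,e^{\pi(1+2\vartheta)\Delta})$; and you correctly observe that once constants may depend on $\pi$, the conductor terms reduce to $\log C(t,\pi) = m\log t + O_\pi(1)$ so that the archimedean contribution gives the main term $m\log t/(4\pi\Delta)$. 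The choice $\pi\Delta = \log\log t - \tfrac{5}{2}\log\log\log t$ (the paper's proof for Theorem~\ref{thm2} uses a coefficient $3$ rather than $\tfrac{5}{2}$, but either works) then balances the contributions, and the lower bound follows symmetrically with $m_\Delta^-$. The remaining terms from the explicit formula (the $r(\pi)$ term and the poles of the local factor at infinity) are of size $O_\pi(\Delta^2 e^{\pi\Delta}/t)$ by (P1) and hence negligible as $t \to \infty$; you could have noted this explicitly, but it is routine.
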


\section{The lowest zero of an $L$-function}

In addition to bounding the order of vanishing at the central point, Theorem \ref{thm2} also allows us to bound the height of the lowest zero of a completed $L$-function satisfying the hypotheses of the previous section in terms of its analytic conductor. In particular, we prove the following theorem (which sharpens a recent result of Omar \cite{O} for automorphic $L$-functions).

\begin{theorem}
Assume the generalized Riemann hypothesis for $\Lambda(s,\pi)$. Then there is at least one zero $\rho_\pi = \frac{1}{2}+i\gamma_\pi$ of $\Lambda(s,\pi)$ satisfying
\[
|\gamma_\pi| \, \le \,  \new{\left( \frac{1}{2} + \vartheta \right)} \frac{ \pi}{\log\log [C(\pi)^{3/m}]} + O\!\left( \frac{ \log \log  \log C(\pi)^{3/m}}{\big(\log\log [C(\pi)^{3/m}]\big)^2}\right)
\]
when  $C(\pi)^{3/m}$ is sufficiently large, where the implied constant is universal.
\end{theorem}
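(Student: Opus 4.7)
The plan is to argue by contradiction. Suppose that $\Lambda(s,\pi)$ has no zero $\rho_\pi = \tfrac12 + i\gamma_\pi$ with $|\gamma_\pi| \le T$; by the generalized Riemann hypothesis all zeros with $0 \le \beta_\pi \le 1$ lie on the critical line, so under this assumption $N(T,\pi) = 0$, provided $T$ is chosen just below the infimum of $|\gamma_\pi|$ over the nontrivial zeros so that $T$ itself is not an ordinate. The goal is then to derive an upper bound on $T$ matching the claim; letting $T$ approach that infimum then forces the existence of a zero at least that close to the real axis.

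Substituting $N(T,\pi) = 0$ into \eqref{general_N} and rearranging yields
\[
\frac{1}{\pi}\int_{-T}^T \re \frac{L'}{L}\bigl(\tfrac12 + iu, \pi_\infty\bigr)\, \mathrm{d}u \, = \, -\bigl(S(T,\pi) + S(T,\tilde\pi)\bigr) + O(m).
\]
To evaluate the integral I expand $\frac{L'}{L}(s,\pi_\infty) = \tfrac12 \log N + \sum_{j=1}^m \frac{\Gamma_\mathbb{R}'}{\Gamma_\mathbb{R}}(s + \mu_j)$ and apply Stirling's formula \eqref{Stirling1} to each gamma factor. For $T \le 1$ the contribution of $-\re\bigl(1/(\tfrac12 + iu + \mu_j)\bigr)$ is uniformly $O(1)$ because it is a scaled Poisson kernel, exactly as in \eqref{Deal_re_part}, so each gamma term integrates to $T\log(|\mu_j|+3) + O(T) + O(1)$. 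Summing over $j$ and combining with $T \log N$ from the conductor factor, the integral equals $T\log C(\pi) + O(m)$, whence
\[
\frac{T \log C(\pi)}{\pi} \, = \, -\bigl(S(T,\pi) + S(T,\tilde\pi)\bigr) + O(m).
\]

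Finally, I apply to $|S(T,\pi) + S(T,\tilde\pi)|$ the bound with constant $\tfrac12 + \vartheta$ established in the proof of Theorem \ref{thm2} — valid uniformly for both self-dual and non-self-dual $L$-functions. For $T \le 1$ one has $\log C(T,\pi) = \log C(\pi) + O(m)$ and $\log\log C(T,\pi)^{3/m} = \log\log C(\pi)^{3/m} + o(1)$, so dividing the resulting inequality by $\log C(\pi)/\pi$ gives
\[
T \, \le \, \Bigl(\tfrac12 + \vartheta\Bigr)\frac{\pi}{\log\log C(\pi)^{3/m}} + O\!\left(\frac{\log\log\log C(\pi)^{3/m}}{(\log\log C(\pi)^{3/m})^2}\right),
\]
where the residual error $O(m/\log C(\pi)) = O(1/\log C(\pi)^{3/m})$ inherited from the $O(m)$ term decays much faster than the displayed error and is absorbed into it. The only real subtlety is, as in \eqref{Deal_re_part}, the uniform treatment of spectral parameters whose real part is close to $-\tfrac12$, where the $-1/s$ pole in \eqref{Stirling1} must be dealt with via the Poisson-kernel observation used in the proof of Theorem \ref{thm2}; no new obstacle appears.
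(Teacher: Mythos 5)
Your proof is correct and follows essentially the same route as the paper's: both use \eqref{general_N}, the evaluation $\tfrac{1}{\pi}\int_{-T}^{T} \re\frac{L'}{L}(\tfrac12+iu,\pi_\infty)\,\mathrm{d}u = \tfrac{T}{\pi}\log C(\pi) + O(m)$ for $0 < T \le 1$, and the bound on $|S(T,\pi)| + |S(T,\tilde\pi)|$ from Theorem \ref{thm2} together with $\log C(T,\pi) = \log C(\pi) + O(m)$. The paper phrases the argument directly, observing that $N(t,\pi) \ge \tfrac{t}{\pi}\log C(\pi) + S(t,\pi) + S(t,\tilde\pi) + O(m)$ becomes positive past the stated threshold, whereas you phrase it by contradiction; these are logically equivalent.
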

\begin{proof} Observe that if $\Lambda(\frac{1}{2},\pi) = 0$, then the theorem holds. Thus, we may assume that $\Lambda(\frac{1}{2},\pi) \ne 0$. If $t$ is small, say $0 < t \le 1$, then by applying  \eqref{Stirling1} and \eqref{Deal_re_part} we can show that
\begin{equation*}
\begin{split}
\frac{1}{\pi} \int_{-t}^t \frac{L'}{L}\big(\tfrac{1}{2}+iu,\pi_\infty) \, \mathrm{d}u &= \frac{t}{\pi} \log\big(N/\pi^m\big) + \frac{1}{2\pi} \sum_{j=1}^m \int_{-t}^t \mathrm{Re} \frac{\Gamma'}{\Gamma}\left(\frac{\frac{1}{2}+iu+\mu_j}{2}\right)\, \mathrm{d}u
\\
&= \frac{t}{\pi} \log C(\pi) + O(m).
\end{split}
\end{equation*}
Using this estimate in \eqref{general_N} and observing that $r(\pi) \ge0$, it follows that
\begin{equation}\label{N_ineq}
N(t,\pi) \, \ge \, \frac{t}{\pi} \log C(\pi) + S(t,\pi) + S(t,\tilde{\pi}) + O(m)
\end{equation}
for $0< t \le 1$. When $0 \leq t \leq 1$ we have 
\begin{equation}\label{Sec5_upp_bound1}
\log C(\pi) \leq \log C(t, \pi) \leq m \log 2 + \log C(\pi)
\end{equation}
which implies that 
\begin{equation}\label{Sec5_upp_bound2}
\log \tfrac{3}{m} \log C(\pi)\leq \log \tfrac{3}{m}\log C(t, \pi)\leq \log \left[ 3 \log 2+ \tfrac{3}{m}  \log C(\pi)\right].
\end{equation}
Using \eqref{Sec5_upp_bound1} and \eqref{Sec5_upp_bound2} in Theorem \ref{thm2} we get
\begin{align*}
|S(t,\pi)| \,&  \le \new{ \left( \frac{1}{4} + \frac{\vartheta}{2} \right)} \frac{(m \log 2 + \log C(\pi))}{\log\log [C(\pi)^{3/m}]} +  O\!\left( \frac{(m \log 2 + \log C(\pi))\log \log \left[ 3 \log 2+ \tfrac{3}{m}  \log C(\pi)\right]}{\big(\log\log [C(\pi)^{3/m}]\big)^2}\right)\\
& = \new{\left( \frac{1}{4} + \frac{\vartheta}{2} \right)} \frac{ \log C(\pi)}{\log\log [C(\pi)^{3/m}]} +  O\!\left( \frac{ \log C(\pi)\log \log  \log C(\pi)^{3/m}}{\big(\log\log [C(\pi)^{3/m}]\big)^2}\right) + O(m).
\end{align*}
From \eqref{N_ineq} we arrive at
\begin{align*}
N(t,\pi) \geq \frac{t}{\pi} \frac{m}{3}\log C(\pi)^{3/m} - \new{\left( \frac{1}{2} + \vartheta \right)} \frac{m}{3} \frac{ \log C(\pi)^{3/m}}{\log\log [C(\pi)^{3/m}]} +  O\!\left( \frac{ m\log C(\pi)^{3/m}\log \log  \log C(\pi)^{3/m}}{\big(\log\log [C(\pi)^{3/m}]\big)^2}\right).
\end{align*}
The right-hand side of this inequality is positive when 
\begin{equation*}
t \geq \new{\left( \frac{1}{2} + \vartheta \right) }\frac{ \pi}{\log\log [C(\pi)^{3/m}]} + O\!\left( \frac{ \log \log  \log C(\pi)^{3/m}}{\big(\log\log [C(\pi)^{3/m}]\big)^2}\right),
\end{equation*}
and this concludes the proof.
\end{proof}

\section{Examples}\label{examples}

In this section, we state a few consequences of Theorem \ref{thm2}. For the definitions and relevant properties of the zeta and $L$-functions described in the following examples, see \cite[Chapter 5]{IK}.

\begin{example}
Let $q >2$ be an integer, let $\chi$ be a primitive Dirichlet character modulo $q$, and let $L(s,\chi)$ be the corresponding Dirichlet $L$-function. Then Theorem \ref{thm2} implies that
\[
|S(t,\chi)| \ \le \ \left(\frac{1}{4}+o(1) \right) \frac{\log q(|t|\!+\!1)}{\log \log q(|t|\!+\!1)} \quad \text{and} \quad  \underset{s=\frac12}{\mathrm{ord}} \ L(s,\chi) \ \le \ \left(\frac{1}{2}+o(1) \right) \frac{\log q}{\log \log q},
\]
assuming the generalized Riemann hypothesis for $L(s,\chi)$ where $S(t,\chi) = \frac{1}{\pi}\arg L(\frac{1}{2}+it,\chi)$.
\end{example}

\smallskip

\begin{example}
Let $f$ be a holomorphic newform of weight $k\ge1$ and level $q$, let
\[
f(z) = \sum_{n\ge 1} \lambda_f(n) \, n^{(k-1)/2} \, e^{2\pi i n z}
\]
be its normalized Fourier expansion at the cusp $\infty$, and let $L(s,f) =\sum_{n\ge 1} \lambda_f(n)  n^{-s}$ for $\mathrm{Re}(s)>1$ be the corresponding $L$-function. Then
\[
|S(t,f)| \ \le \ \left(\frac{1}{2}+o(1) \right) \frac{\log \sqrt{q}(k\!+\!3)(|t|\!+\!1)}{\log \log \sqrt{q}(k\!+\!3)(|t|\!+\!1)} \quad \text{and} \quad  \underset{s=\frac12}{\mathrm{ord}} \ L(s,f) \ \le \ \big( 1+o(1) \big) \frac{\log \sqrt{q}(k\!+\!3)}{\log \log \sqrt{q}(k\!+\!3)},
\]
assuming the generalized Riemann hypothesis for $L(s,f)$ where $S(t,f) = \frac{1}{\pi}\arg L(\frac{1}{2}+it,f)$.

\end{example}

\smallskip

\begin{example}
Let $K$ be a finite extension of $\mathbb{Q}$ with 
discriminant $d_K$ and degree $[K:\mathbb{Q}]$, and let $\zeta_K(s)$ denote the associated Dedekind zeta-function. Further let $\mathrm{rd}_K = |d_K|^{1/[K:\mathbb{Q}]}$ denote the root discriminant of $K$. Then
\[
|S_K(t)| \ \le \ \left(\frac{[K:\mathbb{Q}]}{4}+o(1) \right) \frac{\log \mathrm{rd}_K(|t|\!+\!3)}{\log \log \mathrm{rd}_K(|t|\!+\!3)} \quad \text{and} \quad  \underset{s=\frac12}{\mathrm{ord}} \ \zeta_K(s)\ \le \ \left(\frac{1}{2}+o(1) \right) \frac{\log(|d_K|\!+\!3)}{\log \log (|d_K|\!+\!3)},
\]
assuming the generalized Riemann hypothesis for $\zeta_K(s)$ where $S_K(t) = \frac{1}{\pi}\arg \zeta_K(\frac{1}{2}+it)$.
\end{example}

\section*{Acknowledgements.} 
\noindent EC acknowledges support from CNPq-Brazil grants $302809/2011-2$ and $477218/2013-0$, and FAPERJ grant $E-26/103.010/2012$. MBM is supported in part by an AMS-Simons Travel Grant and the NSA Young Investigator Grant H98230-13-1-0217. We would like to thank Renan Finder for valuable suggestions.

\end{document}